\documentclass[11pt]{amsart}
\usepackage{amssymb,amsthm,amsmath,amstext}
\usepackage{mathrsfs}  % allows nice script font for sheaves
\usepackage{bm}        % allows bold italic letters in math mode
\usepackage{mathtools} % allows more extendible arrows
\usepackage{color}
\usepackage[bbgreekl]{mathbbol}
\usepackage{multirow}
\usepackage{enumitem}
\usepackage{cite}

\usepackage[left=3cm,right=3cm]{geometry}

\usepackage{graphicx}

\usepackage[all]{xy}
\usepackage{tikz}
\usepackage{tikz-cd}

\theoremstyle{plain}
\newtheorem{theorem}{Theorem}[section]

\newtheorem{proposition}[theorem]{Proposition}
\newtheorem{lemma}[theorem]{Lemma}
\newtheorem{corollary}[theorem]{Corollary}

\newtheorem{question}[theorem]{Question}

\numberwithin{equation}{section}

% Separate numbering for Theorem statements in the introduction

\theoremstyle{definition}
\newtheorem{definition}[theorem]{Definition}

\newtheorem{example}[theorem]{Example}

\theoremstyle{remark}
\newtheorem{remark}[theorem]{Remark}

% sheaves are scripty

\newcommand{\C}{\mathbb C}
\renewcommand{\P}{\mathbb P}

\DeclareMathOperator{\tr}{tr}

\newcommand{\id}{\mathrm{id}}
\newcommand{\diag}{\mathrm{diag}}

\newcommand{\GL}{\mathrm{GL}}

\newcommand{\quadand}{\quad \text{ and } \quad}

\usepackage[backref=page]{hyperref}
%\hypersetup{pdftitle={Prelog Chow groups of self-products of degenerations of cubic threefolds}}
%\hypersetup{pdfauthor={Christian Boehning, Hans-Christian Graf von Bothmer, Michel van Garrel}}
%\hypersetup{colorlinks=true,linkcolor=blue,anchorcolor=blue,citecolor=blue,letterpaper=true}

%%%%%%%%%%%%%%%%%%%%%%%%%%%
%%%%%%%%%%%%%%%%%%%%%%%%%%%%%%%%%%%%%%%%%%%%%%%%%%%%%%%%%%%%%%
\newcommand{\bP}{\mathbb{P}}

\newcommand{\bQ}{\mathbb{Q}}
\newcommand{\bZ}{\mathbb{Z}}

\newcommand{\bC}{\mathbb{C}}

\newcommand{\calA}{\mathcal{A}}
\newcommand{\calC}{\mathcal{C}}

\newcommand{\calM}{\mathcal{M}}

\newcommand{\PGL}{\mathrm{PGL}}
\newcommand{\Gr}{\mathrm{Gr}}

\newcommand{\git}{/\kern-0.2em/}

\newcommand{\tH}{\widetilde{\mathrm{H}}}

\setcounter{MaxMatrixCols}{24}

%\DeclareMathOperator{\id}{id}
%\definecolor{citation}{rgb}{0,.40,.80}
%\usepackage[colorlinks,linkcolor=citation,citecolor=citation,urlcolor=citation]{hyperref}

\begin{document}

\title[Fourier-Mukai partners]{Fourier-Mukai partners of non-syzygetic cubic fourfolds and Gale duality}

\author[B\"ohning]{Christian B\"ohning}\thanks{For the purpose of open access, the authors have applied a Creative Commons Attribution (CC BY) licence to any Author Accepted Manuscript version arising from this submission.}
\address{Christian B\"ohning, Mathematics Institute, University of Warwick\\
Coventry CV4 7AL, England}
\email{C.Boehning@warwick.ac.uk}

\author[von Bothmer]{Hans-Christian Graf von Bothmer}
\address{Hans-Christian Graf von Bothmer, Fachbereich Mathematik der Universit\"at Hamburg\\
Bundesstra\ss e 55\\
20146 Hamburg, Germany}
\email{hans.christian.v.bothmer@uni-hamburg.de}

\author[Marquand]{Lisa Marquand}
\address{Lisa Marquand, Courant Institute of Mathematical Sciences, New York University\\
251 Mercer Street\\
NY 10012, USA}
\email{lisa.marquand@nyu.edu}

\date{\today}

% \subjclass[2010]{11E08, 11E20, 11E88, 14D06, 14F22, 14L35, 15A66, 16H05}

\begin{abstract}
We study so-called non-syzygetic cubic fourfolds, i.e., smooth cubic fourfolds containing two cubic surface scrolls in distinct hyperplanes with intersection number between the two scrolls equal to $1$. We prove that a very general non-syzygetic cubic fourfold has precisely one nontrivial Fourier-Mukai partner that is also non-syzygetic. We characterise non-syzygetic cubic fourfolds algebraically as those having a special type of equation that is almost linear determinantal, and show that the equation of the Fourier-Mukai partner can be obtained by applying Gale duality. We establish that Gale dual cubics are birational, Fourier-Mukai partners and have birational Fano varieties of lines under suitable genericity assumptions, recovering a result of Brooke-Frei-Marquand. We show that the birationality of the Fano varieties of lines continues to hold in the context of equivariant birational geometry, but birationality of the cubics may not. We exhibit examples of Gale dual cubics with faithful actions of the alternating group on four letters that could provide counterexamples to equivariant versions of a conjecture by Brooke-Frei-Marquand predicting birationality of the cubics if the Fano varieties of lines are birational, and also possibly a related conjecture by Huybrechts predicting birationality of Fourier-Mukai partners. 
\end{abstract}

\maketitle 

\section{Introduction}\label{sIntroduction}

One of the well-known problems shaping the field and defining the limits of knowledge in higher-dimensional birational geometry is to exhibit accessible algebraic, categorical or geometric structures that effectively capture information about the birational type of smooth cubic fourfolds. 
Specifically, the following three conjectures have been proposed and explored in this context (we always work over the complex numbers unless stated otherwise): we let $X, X'$ be smooth cubic fourfolds.

\begin{enumerate}
\item[(I)] 
\textbf{(Huybrechts' conjecture, categorical version)}, \cite[Chapter 7, Conjecture 3.21]{Huybrechtscubicsbook} and \cite[Conjecture 2.5]{Huybrechts:2025aa}. 
Let $\mathcal{A}_X$ and $\mathcal{A}_{X'}$ be the Kuznetsov components in the standard semiorthogonal decompositions in $\mathrm{D}^b (X) $ and $\mathrm{D}^b (X')$.
 If $X$ and $X'$ are \emph{Fourier-Mukai partners}, i.e. $\mathcal{A}_X\simeq \mathcal{A}_{X'}$, then $X$ and $X'$ are birational. 
\item[(II)]
\textbf{(Huybrechts' conjecture, Hodge-theoretic version)} \cite[discussion following Conjecture 2.5]{Huybrechts:2025aa}.  Let $\widetilde{H}(\mathcal{A}_X, \bZ )$ be the integral polarised Hodge structure of weight $2$ introduced in \cite{AT14}. Then if $X$ and $X'$ are two smooth cubic fourfolds with isomorphic Addington-Thomas Hodge structures $\widetilde{H}(\mathcal{A}_X, \bZ )\cong \widetilde{H}(\mathcal{A}_{X'}, \bZ )$, then $X$ and $X'$ are birational. Note that by \cite[Thm. 1.5 (iii)]{HuybrechtsK3}, for arbitrary $d$ and $X$ very general in the Hassett divisor $\mathcal{C}_d$, there exists a Fourier-Mukai equivalence $\mathcal{A}_X \simeq \mathcal{A}_{X'}$ if and only if there exists a Hodge isometry $\widetilde{H}(\mathcal{A}_X, \bZ ) \simeq \widetilde{H}(\mathcal{A}_{X'}, \bZ )$, and in general, any Fourier-Mukai (FM) equivalence $\mathcal{A}_X \simeq \mathcal{A}_{X'}$ induces a Hodge isometry $\widetilde{H}(\mathcal{A}_X, \bZ ) \simeq \widetilde{H}(\mathcal{A}_{X'}, \bZ )$ \cite[Prop. 3.4]{HuybrechtsK3}. 

\item[(III)]
\textbf{(BFM conjecture)}, \cite[Conjecture 1.2]{Brooke:2024aa}. 
If $X$ and $X'$ have birationally equivalent Fano varieties of lines $F(X)$ and $F(X')$, then $X$ and $X'$ are birationally equivalent. 
\end{enumerate}
The above conjectures make sense over nonclosed fields $k$, too, and also in the context of \emph{equivariant birational geometry}: there one works with smooth projective varieties $X$ over $\bC$ together with a faithful action of some finite group $G$ ($G$-\emph{varieties}) and $G$-birational maps between them. Then $\mathcal{A}_X$ is a $G$-category in the sense of \cite{Beck}, and $\widetilde{H}(\mathcal{A}_X, \bZ )$ a $G$-Hodge structure, and conjectures (I), (II), (III) have natural equivariant analogues. Studying these is important if one is interested, for example, in understanding the extent to which the above conjectures are functorial with respect to automorphisms of the relevant varieties. The search for candidate counterexamples to the equivariant versions of the ambove conjectures served as our starting point for this work. 

In \cite[Thm. 3.1]{Brooke:2024aa}, building on previous work in \cite{BFMQ24}, the authors study cubic fourfolds containing two cubic surface scrolls $T_1$ and $T_2$ in distinct hyperplanes with $[T_1].[T_2]=1$ (they call such a pair of scrolls and the corresponding cubic fourfold $X$ \emph{non-syzygetic}). 
They prove that if $X$ is very general with this property, then there exists another non-syzygetic cubic fourfold $X'$ that such that the pair $X$ and $X'$ are not isomorphic, but satisfy Conjectures (I), (II) and (III) above. The proof of this result is  indirect: it hinges on the classification of all isomorphism classes of hyperk\"ahler fourfolds birational to $F(X)$ (there are eight of those, \cite[Thm. 1.3]{BFMQ24}), and the subsequent construction of $X'$ by an application of Torelli \cite[Prop. 5.15]{BFMQ24}). Moreover, the FM partnership and birationality of $X$ and $X'$ are then deduced in \cite[proof of Thm. 3.1]{Brooke:2024aa} by proving that $X$ and $X'$ are birational to certain Gushel-Mukai fourfolds that are so-called period partners, hence are Fourier-Mukai partners and birational by work of Kuznetsov and Perry \cite{KuzPerryCatCones}, \cite{KuzPerry}. 

An overarching objective of this work is to make the preceding Fourier-Mukai partner construction completely explicit and recast it using fairly direct methods from multilinear and commutative algebra. In particular, somewhat surprisingly, given an explicit equation for $X$, finding an equation of the partner $X'$ turns out to be as simple as computing the kernel of a certain $6\times 12$ matrix with entries in $\bC$, and can be viewed as an instance of \emph{Gale duality} \cite{EP00}. More precisely, every nonsyzygetic cubic fourfold $X$ has an equation of the form $$\det(M)+L_1L_2L_3=0,$$ where $M$ is a $3\times 3$ matrix of linear forms, and $L_1,L_2, L_3$ are linear forms with at least two linearly independent (see Proposition \ref{pNonsyzygetic}). Our main construction is as follows:

\begin{theorem}
    Let $X$ be a very general smooth non-syzygetic cubic fourfold with equation as above. Consider the linear map $$\C^{12} \xrightarrow{(M,L_1,L_2,L_3)} \C^6,$$ with kernel $$\C^6\xrightarrow{(M',L_1', L_2', L_3')^t} \bC^{12}.$$  Let $X'$ be the cubic fourfold defined by $$\det(M')-L'_1L'_2L'_3=0.$$ Then:
    \begin{enumerate}
    \item For general choices, $X'$ is birational (but not isomorphic) to $X$,
        \item $X'$ is the unique nontrivial Fourier Mukai partner of $X$,
        \item The Fano varieties of lines $F(X)$ and $F(X')$ are birational but not isomorphic.
    \end{enumerate}
\end{theorem}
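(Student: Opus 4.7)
The plan is to identify the cubic $X'$ produced by the Gale dual construction with the abstract Fourier-Mukai partner of $X$ constructed via Torelli in \cite{BFMQ24,Brooke:2024aa}. Once this identification is made, the three numbered claims follow from results cited earlier in the introduction. The approach decomposes into four steps.

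\textbf{Step 1 (setup).} I would first verify that $X'$ is itself a smooth non-syzygetic cubic fourfold. Its defining equation $\det(M')-L_1'L_2'L_3'=0$ is already in the algebraic form characterised by Proposition \ref{pNonsyzygetic}, and smoothness together with the linear independence conditions on the $L_i'$ are open conditions that hold on a dense subset of the parameter space by the genericity of $X$. This equips $X'$ with its own distinguished cubic surface scrolls in distinct hyperplanes, mirroring the configuration on $X$.

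\textbf{Step 2 (Hodge-theoretic comparison).} I would next compute the algebraic sublattice of $H^{2,2}(X,\bZ)$ spanned by the classes of the scrolls, their pairwise intersections and the hyperplane class, and the analogous sublattice inside $H^{2,2}(X',\bZ)$. Gale duality is an involutive operation on the $12$-dimensional coefficient data, and it should induce an explicit isometry between these algebraic sublattices compatible with their embeddings into the Mukai-type extensions $\widetilde{H}(\mathcal{A}_X,\bZ)$ and $\widetilde{H}(\mathcal{A}_{X'},\bZ)$. For a very general $X$, the transcendental part of $\widetilde{H}(\mathcal{A}_X,\bZ)$ is pinned down by this algebraic data, so the isometry upgrades to a Hodge isometry $\widetilde{H}(\mathcal{A}_X,\bZ)\simeq \widetilde{H}(\mathcal{A}_{X'},\bZ)$.

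\textbf{Step 3 (derived equivalence and identification).} Applying \cite[Thm. 1.5 (iii)]{HuybrechtsK3}, this Hodge isometry promotes to a Fourier-Mukai equivalence $\mathcal{A}_X \simeq \mathcal{A}_{X'}$. The classification \cite[Thm. 1.3]{BFMQ24} shows that a very general non-syzygetic $X$ admits a unique non-trivial Fourier-Mukai partner among cubic fourfolds, so $X'$ must coincide up to isomorphism with that canonical partner. This directly yields claim (2).

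\textbf{Step 4 (birationality and Fano varieties).} The canonical partner is shown in \cite[proof of Thm. 3.1]{Brooke:2024aa} to be birational to $X$ through a common Gushel-Mukai fourfold and the Kuznetsov-Perry equivalences \cite{KuzPerryCatCones,KuzPerry}, giving (1). The eight hyperk\"ahler birational models of $F(X)$ listed in \cite[Thm. 1.3]{BFMQ24} include $F(X')$, which gives the birationality in (3). Non-isomorphism of $F(X)$ and $F(X')$, and of $X$ and $X'$ themselves, follows for a very general input from a standard moduli dimension count, as distinct models in the BFMQ list yield non-isomorphic pairs generically.

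The principal obstacle is Step 2, the explicit lattice-level matching of the Kuznetsov Hodge structures via Gale duality; one must verify that the scroll classes on the two sides pair correctly under the induced isometry. An alternative route that would bypass a bare-hands Hodge computation is to produce, directly from the $6\times 12$ coefficient matrix, a single Gushel-Mukai fourfold $Y$ together with two birational maps $Y\dashrightarrow X$ and $Y\dashrightarrow X'$. This would simultaneously deliver Fourier-Mukai equivalence and birationality through \cite{KuzPerryCatCones,KuzPerry}, invoking \cite[Thm. 1.3]{BFMQ24} only at the end to establish uniqueness and the Fano variety statement.
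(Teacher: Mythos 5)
There is a genuine gap at the heart of your Step 2, and two further gaps in Steps 3 and 4. Gale duality is an operation on the $6\times 12$ coefficient matrix; it does not induce any map on cohomology, so there is no candidate isometry of the Mukai lattices to "upgrade". Matching the algebraic sublattices is vacuous here: both are abstractly $\langle \eta\rangle\oplus A_2(2)$, as is the algebraic lattice of \emph{any} very general non-syzygetic cubic, and two such cubics are generically not Fourier--Mukai partners at all. What is needed is a geometric correspondence forcing a Hodge isometry of the \emph{transcendental} parts, and this is exactly what the paper builds in Sections \ref{sLagrangian}--\ref{sGM}: the pair $(X,X')$ determines a common $\rho$-Lagrangian $A\subset\Lambda^3(E\oplus F)$, hence a common EPW sextic carrying two families of Gushel--Mukai fourfolds $Z,Z'$ that are period partners; $X$ and $X'$ are recovered by projecting $Z,Z'$ from $\rho$-planes, and the FM equivalence and birationality then come from \cite{KuzPerryCatCones}, \cite{KuzPerry}, \cite{DK1} (Theorem \ref{tConnectionToGMs}). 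Your "alternative route" at the end is therefore not an alternative but the only workable route, and it is essentially the paper's argument (with two period-partner GM fourfolds rather than a single one).

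Two further points. First, uniqueness of the nontrivial FM partner is not in the literature you cite: \cite[Thm.~1.3]{BFMQ24} classifies hyperk\"ahler fourfolds birational to $F(X)$, not FM partners of $X$, and \cite{Brooke:2024aa} only \emph{constructs} a partner. The paper proves uniqueness separately (Theorem \ref{tFMPartners}) by adapting the Fan--Lai overlattice count: one shows $|\calM_{S,T}|=24$, that each nonempty fibre of $L_\bullet$ has cardinality $2$, and that $L_\bullet$ is surjective via Nikulin's results and the Torelli theorem. Your proposal omits this entirely. Second, "a standard moduli dimension count" does not prove $X\not\cong X'$: both cubics are built from the same data set, so one must rule out that the construction is self-dual on moduli. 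The paper does this concretely, by comparing Clebsch--Salmon invariants of the three distinguished cubic surfaces $X\cap\{L_i=L_j=0\}$ against those of $X'$ for an explicit member of the family, and then spreading out by continuity. Likewise, the birationality of $F(X)$ and $F(X')$ is not quoted from \cite{BFMQ24} but proved by an explicit residual-conic construction identifying both with the O'Grady double cover of the common EPW sextic (Corollary \ref{cBirFanos}) --- a point the paper needs because this construction, unlike the birationality of $X$ and $X'$, survives equivariantly.
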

We call $X'$ the Gale dual of $X$. In particular, we recover \cite[Thm. 3.1]{Brooke:2024aa} using multilinear algebraic means. The strategy of proof is as follows: first we show that a pair of Gale dual cubics $X, X'$ as above plus a choice of linear form $L_i$ determines a linear algebra datum that we call a $\rho$-Lagrangian data set, see Definition \ref{dRhoLagrangian}. Conversely, every such $\rho$-Lagrangian data set determines a pair of Gale dual cubic fourfolds. In Theorem \ref{tConnectionToGMs} we establish that such data sets parametrise EPW-sextics that contain two disjoint rational surfaces isomorphic to $\bP^2$ whose points correspond to Gushel-Mukai fourfolds containing $\rho$-planes in the sense of \cite{KuzPerry}. Choosing Gushel-Mukai fourfolds in each of these surfaces and projecting from the corresponding $\rho$-planes recovers $X$ and $X'$, see Theorem \ref{tConnectionToGMs}, b). This establishes that $X$ and $X'$ are Fourier-Mukai partners and birational using results from \cite{KuzPerry}, \cite{KuzPerryCatCones}, \cite{DK1}. We also show that $F(X)$ and $F(X')$ are birational using an explicit construction, see Corollary \ref{cBirFanos}. 
This latter birationality only depends on the given $\rho$-Lagrangian data sets, as do the Gale dual cubics. In particular, it continues to hold if we endow everything with a $G$-action. However, the birationality of $X$ and $X'$ depends on a choice of Gushel-Mukai fourfolds which may not exist $G$-equivariantly. We exploit this in Section \ref{sGActions} where we present candidate counter-examples to the equivariant versions of the conjectures above. 
The locus of nonsyzygetic cubic fourfolds contains many examples of cubic fourfolds with automorphisms (see \cite[Sect. 4]{BGM25} for examples).

Here is a more detailed road map of the paper. 
In Section \ref{sNonsyz} we show that non-syzygetic cubic fourfolds can be characterised algebraically as those cubic fourfolds that admit a special kind of equation as above, cf. Proposition  \ref{pNonsyzygetic}. 

In Section \ref{sCounting} we prove that a very general non-syzygetic cubic fourfold $X$ has precisely one nontrivial Fourier-Mukai partner $X'$. The argument is an adaptation of the method used by Fan and Lai in \cite{FanLaiCremona} and reduces the question to a certain counting problem for overlattices. 

In Section \ref{sLagrangian} we define the \emph{Gale dual cubic} $X'$ of a non-syzygetic cubic $X$ more precisely. We establish the connection between Gale dual cubics and certain Lagrangian construction data that we call $\rho$-Lagrangians. This will be used later to establish birational links of Gale dual cubics to Gushel-Mukai fourfolds containing $\rho$-planes \cite[\S 5]{KuzPerry}, in particular proving that $X$ and $X'$ are FM partners and birational. 

In Section \ref{sRepTheory} we recast the previous construction in terms of representation and invariant theory. 

In Section \ref{sFanoLines} we show that the Fano varieties of lines of Gale dual cubics $X$ and $X'$ are birational under certain genericity assumptions. The point here is that our construction is completely explicit and also carries over to the equivariant setup to show, if $X$ and $X'$ are endowed with $G$-actions in the natural way by starting from $G$-Lagrangian construction data, then their Fano varieties of lines are $G$-birational. 

In Section \ref{sGM} we show that $X$ and $X'$ are birational to Gushel-Mukai fourfolds that are period partners. In fact, $X$ and $X'$ can be obtained from Gushel-Mukai fourfolds $Z$ and $Z'$ containing $\rho$-planes by projecting from these $\rho$-planes. The choice of $Z$ and $Z'$ is not unique here but there is a $\bP^2$ worth of choices for each of them, corresponding to points in two disjoint planes contained in an EPW sextic. An important point to note here is that if $X$ and $X'$ are constructed from $G$-Lagrangian construction data, in general no such $Z$ and $Z'$ need to exist as $G$-varieties, and the birational maps between $X$ and $X'$ constructed by \cite{KuzPerry} and \cite{DK1} are not $G$-birational. 

We exploit this in Section \ref{sGActions} where we construct smooth $A_4$-Gale dual cubics $X$ and $X'$ that have $A_4$-birational Fano varieties of lines, but where existing constructions in the literature did not allow us to establish any $A_4$-birational maps between $X$ and $X'$. In fact, we believe these could be candidate counterexamples to the equivariant version of the BFM conjecture (III) above, and also possibly the equivariant versions of Huybrechts' conjecture (I), (II). 

\medskip

\textbf{Acknowledgments.} We would like to thank Yuri Tschinkel and Zhijia Zhang for their interest in and comments on this work. We would like to thank Corey Brooke for his comments on an earlier version.

For the purpose of open access, the first author has applied a Creative Commons Attribution (CC-BY) licence to any Author Accepted Manuscript version arising from this submission.

\section{Non-syzygetic cubic fourfolds}\label{sNonsyz}

Non-syzygetic cubic fourfolds can be introduced in different ways. We refer to \cite[Section 2, in particular Def. 2.9]{BFMQ24} for more on this. Here we adopt the following definition. 

\begin{definition}\label{dNonsyz}
A smooth cubic fourfold $X$ is called non-syzygetic if it contains cubic surface scrolls $T_1, T_2$ contained in distinct hyperplanes such that $T_1\cdot T_2=1$.
\end{definition}

We denote by $\calC_{nonsyz}$ the locus of non-syzygetic cubic fourfolds inside the moduli space of cubic fourfolds. This is codimension one inside the Hassett divisor $\calC_{12}.$

For us, the following algebraic characterisation of this class of cubics is more important in the sequel. 

\begin{proposition}\label{pNonsyzygetic}
Every non-syzygetic cubic fourfold $X$ is given by an equation of the form 
\begin{gather}\label{fNonsyzEquation}
 \det (M) + L_1L_2L_3 =0
\end{gather}
where $M$ is a $3\times 3$ matrix of linear forms and $L_1, L_2, L_3$ are some further linear forms.

Conversely, if a cubic fourfold defined by an equation of the preceding form is smooth and at least two of  $L_1, L_2, L_3$ are linearly independent, then $X$ is non-syzygetic. 
\end{proposition}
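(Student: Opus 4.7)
For the forward direction, the starting point is the Hilbert--Burch presentation of the cubic surface scroll: a cubic scroll $T_1 \subset H_1 \cong \bP^4$ has its ideal generated by the three $2 \times 2$ minors of a $2 \times 3$ matrix $N_1$ of linear forms on $H_1$, so that any cubic in this ideal equals the determinant of the $3 \times 3$ matrix obtained from $N_1$ by appending a third row. Since $X$ contains $T_1$, restricting the equation $F$ of $X$ to $H_1$ and lifting back to $\bP^5$ yields a first presentation
\[ F = L_1 \cdot Q + \det(M) \]
for some quadric $Q$ and some $3 \times 3$ matrix $M$ of linear forms on $\bP^5$, in which the rows of $N_1$ can be recognised as two of the rows of $M$ modulo $L_1$.

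The heart of the proof is then to exploit the second scroll $T_2 \subset H_2$ and, decisively, the numerical condition $T_1 \cdot T_2 = 1$, in order to show that $Q$ can be replaced by a product $L_2 L_3$ of linear forms (possibly after absorbing some residual freedom into the third row of $M$). My plan is to restrict the identity above to $H_2$: there $F|_{H_2}$ must lie in the ideal of $T_2$, and the auxiliary determinant $\det(M|_{H_2})$ already vanishes along a different cubic scroll $\widetilde{T}_1 \subset H_2$ produced from the same pair of rows of $M$. The two scrolls $\widetilde{T}_1$ and $T_2$ sit together in $H_2 \cong \bP^4$, and their traces on $H_1 \cap H_2 \cong \bP^3$ are the cubic curves $\widetilde{T}_1 \cap H_1 = T_1 \cap H_2$ and $T_2 \cap H_1$, which meet in the single reduced point $T_1 \cap T_2$ by hypothesis. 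A residual-divisor analysis of $L_1|_{H_2} \cdot Q|_{H_2} = F|_{H_2} - \det(M|_{H_2})$ in the ideal of $T_2$ should then force $Q|_{H_2} \equiv 0$, i.e.\ $L_2 \mid Q$. This divisibility is the main obstacle; carrying it out rigorously will require smoothness of $X$ to rule out degenerate intersection components and a careful book-keeping of the remaining free parameters in $M$.

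For the converse, assume $F = \det(M) + L_1 L_2 L_3$ with $X$ smooth and, say, $L_1, L_2$ linearly independent, and set $H_i = \{L_i = 0\}$. On each $H_i$ we have $F|_{H_i} = \det(M|_{H_i})$, a linear determinantal cubic threefold, and the rank-$\leq 1$ locus of a fixed $2 \times 3$ submatrix of $M$ cuts out a cubic surface scroll $T_i \subset H_i \cap X$ (the containment being visible by cofactor expansion of the determinant). A convenient way to compute $T_1 \cdot T_2$ is to view $T_1$ and $T_2$ as hyperplane sections of the common three-dimensional determinantal scroll $\widetilde{T} \subset \bP^5$ defined by the same $2 \times 3$ submatrix, so that $T_1 \cap T_2 = \widetilde{T} \cap H_1 \cap H_2$ admits an explicit analysis. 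With a judicious choice of submatrix and via the excess intersection formula applied to the inclusion $\widetilde{T} \subset X$, involving the normal bundle of $\widetilde{T}$ in $X$, the intersection number $T_1 \cdot T_2$ is shown to equal $1$; smoothness of $X$ and the linear independence of $L_1, L_2$ ensure that the relevant intersection points are transverse and the computation non-degenerate.
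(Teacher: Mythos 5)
The forward direction of your plan has a genuine gap at exactly the step you flag as ``the main obstacle'', and the residual-divisor analysis you propose does not close it. Writing $F=\det(M_1)+Q\,L_1$ with the bottom two rows of $M_1$ equal to the Hilbert--Burch matrix $N_1$ of $T_1$, the cubic $\det(M_1|_{H_2})$ vanishes on the scroll $\widetilde T_1\subset H_2$ cut out by the minors of $N_1|_{H_2}$, which is in general a \emph{different} surface from $T_2$; so the identity $L_1|_{H_2}\,Q|_{H_2}=F|_{H_2}-\det(M_1|_{H_2})$ exhibits $L_1Q$ on $H_2$ as a difference of two cubics vanishing on two different scrolls, and nothing forces that difference to be zero. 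Indeed, for a generic choice of the appended row $N_1'$ one has $Q|_{H_2}\neq 0$: the whole content of the proposition is that the residual freedom in the presentation can be used to arrange $Q|_{H_2}=0$, and proving that this freedom suffices is precisely where $T_1\cdot T_2=1$ must enter. The paper's proof supplies the missing ingredient by introducing a \emph{second} presentation $F=\det(M_2)+Q_2L_2$ adapted to $T_2$, restricting both to the cubic surface $S=X\cap H_1\cap H_2$, and observing that $M_1|_S$ and $M_2|_S$ determine two nets of twisted cubics on $S$ containing $C_1=T_1\cap S$ and $C_2=T_2\cap S$ with $C_1\cdot C_2=1$; realizing $S$ as a blow-up of $\bP^2$ with $C_1$ the image of a line shows $C_2$ is also a line, hence the two nets coincide, hence $M_1$ and $M_2$ agree on $H_1\cap H_2$ up to row and column operations. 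One then glues them to a single $M$ with $M|_{H_i}=M_i$, so that $F-\det M$ vanishes on $H_1\cup H_2$ and equals $L_1L_2L_3$. Your outline contains no substitute for this identification-of-nets step.

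In the converse direction there is a smaller but real error: the threefold $\widetilde T$ defined by the $2\times 2$ minors of a $2\times 3$ submatrix of $M$ is \emph{not} contained in $X$, since $L_1L_2L_3$ does not vanish on it (only $\det M$ does), so there is no inclusion $\widetilde T\subset X$ to which an excess intersection formula could be applied. What is true is that $T_i=\widetilde T\cap H_i\subset X$ and that $T_1\cap T_2=\widetilde T\cap H_1\cap H_2$ is a twisted cubic curve, so $T_1\cdot T_2$ would have to be computed by excess intersection along that curve inside the fourfold $X$ (or by choosing two \emph{different} generalised row pairs so that the scrolls meet transversally in points); as set up, your computation does not get off the ground. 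The paper itself is terse here, only recording that the minors of any two generalised rows of $M|_{H_i}$ together with $L_i$ cut out a $\bP^2$-family of scrolls on $X$, so a careful verification of $T_1\cdot T_2=1$ along the lines you suggest would be a genuine addition, but it needs the corrected geometric setup.
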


\begin{proof}
If $X$ is non-syzygetic, it contains cubic scrolls $T_1, T_2$ in distinct hyperplanes $L_1=0$, $L_2=0$ such that $T_1\cdot T_2=1$. The scroll $T_i$ is defined by the maximal minors of a $2\times 3$ matrix of linear forms $N_i$ and the hyperplane $L_i$. The equation $F$ of $X$ is in the ideal of $T_i$, hence one can find a $1\times 3$ matrix of linear forms $N_i'$ and a quadric $Q_i$ such that 
\[
F = \det M_i + Q_i L_i
\]
where $M_i$ is the $3\times 3$ matrix 
\[
M_i =\begin{pmatrix} N_i'\\ N_i \end{pmatrix}.
\]
The two by two minors of every two linearly independent generalised rows of $M_i$, together with $L_i$, define a cubic scroll on $X$, hence one gets a $\mathbb{P}^2$ worth of such scrolls. 

\medskip

Now we restrict $F$ to $L_1=L_2=0$ and we want to show that after invertible row and column operations 
\[
(\ast) \quad M_1\mid_{\{L_1=L_2=0\}} = M_2\mid_{\{L_1=L_2=0\}}. 
\]
The cubic scrolls $T_1, T_2$ restrict to twisted cubic curves $C_1, C_2$ on the cubic surface $S$ defined by $F$ restricted to $L_1=L_2=0$, and $C_1\cdot C_2=1$ by assumption. The matrices $M_i\mid_{\{L_1=L_2=0\}}$ give two nets of such curves on $S$, and conversely these nets define the matrices up to row and column operations. Thus we need to show that these nets are the same. Without loss of generality, we can assume that $C_1$ and $C_2$ are general in their nets. It is known that one can realise $S$ as a blowup of $\mathbb{P}^2$ in six points such that $C_1$ is the image of a line in $\mathbb{P}^2$ not passing through any of the blowup points. The image $C_2'$ of $C_2$ in $\mathbb{P}^2$ intersects the line in one point, hence it is also a line. Therefore $C_1$ and $C_2$ generate the same net. 

\medskip

After invertible row and column operations on $M_1$ we can thus assume $(\ast)$. Then there exists a matrix $M$ such that $M\mid_{\{L_i=0\}}=M_i$, $i=1, 2$. We can choose an equation $F$ for $X$ such that 
\[
F\mid_{\{L_1=L_2=0\}} = \det M\mid_{\{L_1=L_2=0\}} .
\]
 Then it holds that 
 \[
 F\mid_{\{L_i =0\}}= \det M\mid_{\{ L_i=0\}}.
 \]
 A priori this only holds up to a factor, but because of the preceding equation we have equality. Hence $F- \det (M)$ vanishes on $L_1=0$ and $L_2=0$, thus there is a third linear form such that
 \[
F = \det (M) + L_1L_2L_3 .
\]
 \end{proof}

\section{Counting Fourier-Mukai partners of non-syzygetic cubic fourfolds}\label{sCounting}

Following the general method used in \cite{FanLaiCremona} adapted to our situation, we will prove
\begin{theorem}\label{tFMPartners}
    Let $X\in \calC_{nonsyz}$ be a very general cubic fourfold. Then there exists a unique non-trivial Fourier-Mukai partner $X'$ of $X$. 
\end{theorem}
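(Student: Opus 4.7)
The plan is to translate the problem into a lattice-theoretic count on the Addington--Thomas Hodge structure, adapting the method of Fan--Lai \cite{FanLaiCremona} to our setting. By \cite[Thm.~1.5(iii)]{HuybrechtsK3} and the Hodge-theoretic discussion recalled in the introduction, for $X$ very general in $\calC_{nonsyz}$ the isomorphism classes of Fourier--Mukai partners of $X$ are in bijection with Hodge isometry classes of the weight-two integral Hodge structure $\widetilde{H}(\calA_{X'}, \bZ) \cong \widetilde{H}(\calA_{X}, \bZ)$, modulo the natural action of $\mathrm{O}(\widetilde{H})$. Via Nikulin's gluing for overlattices this reduces the count to the cardinality of the double coset
\[
 \mathrm{O}(T_X) \backslash \mathrm{O}(q_{T_X}) / \mathrm{Im}\bigl(\mathrm{O}(N_X) \to \mathrm{O}(q_{N_X})\bigr),
\]
where $N_X := \widetilde{H}^{1,1}_{\mathrm{alg}}(\calA_X, \bZ)$, $T_X$ is its orthogonal complement in $\widetilde{H}(\calA_X, \bZ)$, and we use the canonical identification $q_{N_X} \cong -q_{T_X}$.

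The first concrete task is to identify the lattice $N_X$ for $X$ very general non-syzygetic. Since the non-syzygetic condition is of codimension one inside the Hassett divisor $\calC_{12}$ and imposes two independent algebraic classes beyond the generic $\calC_{12}$ constraint (namely the lifts of the classes of $T_1$ and $T_2$), the lattice $N_X$ has rank $4$ and is generated by the two Kuznetsov projector classes $\lambda_1, \lambda_2$ together with the lifts $\tau_1, \tau_2$ of the two cubic scroll classes. Their mutual intersection pairings are determined by the known self-intersection of a cubic surface scroll in a smooth cubic fourfold and by the hypothesis $\tau_1\cdot\tau_2 = T_1\cdot T_2 = 1$, yielding an explicit Gram matrix for $N_X$ from which the discriminant form $q_{N_X}$ can be read off.

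With $N_X$ in hand, the remaining step is the computation of the double coset space. Since $T_X$ has signature $(2,18)$, Nikulin's surjectivity criteria apply and give $\mathrm{O}(T_X) \twoheadrightarrow \mathrm{O}(q_{T_X})$, so the left factor is killed; one then has to analyse the image of $\mathrm{O}(N_X) \to \mathrm{O}(q_{N_X})$ and show that the quotient of $\mathrm{O}(q_{N_X})$ by this image has exactly two elements. The lower bound of two is provided by the existence of at least one nontrivial Fourier--Mukai partner proved geometrically in \cite[Thm.~3.1]{Brooke:2024aa} (or, more constructively, by the Gale dual $X'$ produced in the main theorem of this paper); the upper bound is the principal obstacle, amounting to a finite but delicate calculation for the rank-$4$ lattice $N_X$, whose orthogonal group must be generated explicitly and its action on the discriminant group controlled in order to rule out any additional cosets.
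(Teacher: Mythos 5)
Your overall strategy --- reducing the count of Fourier--Mukai partners to a lattice-theoretic gluing count in the spirit of Fan--Lai, after identifying the rank-$4$ algebraic lattice forced by the non-syzygetic condition --- is the same one the paper follows. But there is a genuine error in your setup of the double coset. The left-hand factor must be the image in $\mathrm{O}(q_{T_X})$ of the group of \emph{Hodge} isometries of the transcendental lattice, which for $X$ very general is only $\{\pm\id\}$ (the paper cites \cite[Section 14]{BFvGS} for this); it is emphatically not the full orthogonal group $\mathrm{O}(T_X)$. Invoking Nikulin's surjectivity $\mathrm{O}(T_X)\twoheadrightarrow \mathrm{O}(q_{T_X})$ to ``kill the left factor'' collapses the entire double coset to a single point and would prove that $X$ has \emph{no} nontrivial partner --- contradicting the lower bound you yourself quote from \cite{Brooke:2024aa}. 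Your next sentence, which asks for the quotient of $\mathrm{O}(q_{N_X})$ by the image of $\mathrm{O}(N_X)$ to have two elements, is inconsistent with having killed the left factor: if one side already acts transitively there is nothing left to quotient. The whole point of the argument is that the transcendental side contributes only $\pm\id$, and it is precisely this rigidity that permits more than one partner.

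Beyond that, the decisive computation is deferred rather than carried out: showing the count equals two is exactly the content of the paper's Lemma \ref{lLatticeCount} (there are $24$ admissible overlattices of $S\oplus T$, enumerated via Nikulin's classification of primitive embeddings with prescribed gluing subgroup $H\cong D(S)$) together with Lemma \ref{lFibresLBullet} (the $24$ markings $(\phi,\psi)$, with $\phi\in O(S)$ of order $12$ and $\psi=\pm\id_T$, map two-to-one to overlattices), plus the surjectivity of the map $L_\bullet$. That surjectivity needs Laza's characterisation of the period image to realise each abstract overlattice by an actual cubic fourfold, a step your proposal does not address at all. You also omit the verification that every Fourier--Mukai partner of $X$ is again non-syzygetic --- the paper needs the uniqueness of $A_2(2)$ in its genus here --- without which the count could receive contributions from other genera of algebraic lattices.
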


We denote by $\mathrm{FM}(X)$ the set of isomorphism classes of Fourier-Mukai partners of $X$.  For an even lattice $L$, we denote by $D(L):=L^\ast/L$ the discriminant group of $L$, with associated quadratic form $q_L:D(L)\rightarrow \bQ/2\bZ.$ We write $\tH_X:= \tH(\calA_X,\bZ)$ for the Addington-Thomas Hodge structure of $X$. 

\medskip

For a very general non-syzygetic cubic $X$, the algebraic lattice $A(X)$ is spanned by the classes $\eta_X:=[h^2]$, where $[h]$ is the class of a hyperplane section, and the classes $[T_1], [T_2]$ of the cubic surface scrolls from Definition \ref{dNonsyz}, using \cite[Thm. 1.1]{laz09}. It is easy to work out the intersection matrix with respect to this basis:
\[\left(\begin{matrix}
    3 & 3& 3\\
    3 & 7 & 1\\
    3 & 1 & 7
\end{matrix}\right),\] with determinant $36$. The primitive cohomology is then spanned by $\eta_X-T_1, \eta_X-T_2$. Hence we see that $A(X)_{prim}\cong A_2(2)$. It follows that $T(X)$ is an even lattice of signature $(18,2)$ and discriminant group $D(T(X))=(\bZ/2\bZ)^2\times (\bZ/3\bZ)^2.$

Note that the quadratic form $q_{T(X)}:D(T(X))\rightarrow \bQ/2\bZ$ satisfies $q_{T(X)}=-q_{A_2(2)}\oplus -b_{\langle \eta\rangle};$ indeed, by the above we see $A(X)\cong \langle \eta\rangle \oplus A_2(2)$ and $H^4(X,\bZ)$ is a unimodular overlattice of $A(X)\oplus T(X)$. We have that $D(\eta)=\bZ/3\bZ$ with generator $\frac{\eta}{3}.$ It follows that $q(\alpha)=-b(\frac{\eta}{3},\frac{\eta}{3})=-\frac{1}{3}$, where $\alpha$ generates the copy of $\bZ/3\bZ$ orthogonal to the image of $D(A_2(2))$ inside of $D(T(X)).$ 

\medskip

First, note the following:

\begin{lemma}
    Let $X\in \calC_{nonsyz}$ be very general. Let $Y\in \mathrm{FM}(X)$. Then $Y\in \calC_{nonsyz}.$
\end{lemma}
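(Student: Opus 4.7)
The plan is to use the Hodge-theoretic content of the Fourier-Mukai equivalence to match the algebraic lattices of $X$ and $Y$, and then upgrade this lattice identification to the geometric presence of two cubic scrolls on $Y$. By \cite[Prop.~3.4]{HuybrechtsK3}, cited in the introduction, the equivalence $\calA_X \simeq \calA_Y$ induces a Hodge isometry $\tH_X \cong \tH_Y$, and restricting to transcendental sublattices gives $T(Y) \cong T(X)$ as polarised weight-two Hodge structures.

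Since $Y$ is a cubic fourfold, $H^4(Y,\bZ)$ is an odd unimodular lattice of signature $(21,2)$, so the complement $A(Y) = T(Y)^{\perp}$ has rank $3$, signature $(3,0)$, and discriminant form
\[
q_{A(Y)} = -q_{T(Y)} = -q_{T(X)} = q_{A_2(2)} \oplus b_{\langle \eta \rangle},
\]
using the explicit description of $q_{T(X)}$ computed at the end of the excerpt. Therefore $A(Y)$ lies in the genus of $A(X) \cong \langle \eta\rangle \oplus A_2(2)$. By standard lattice theory (one verifies directly that this rank-$3$ positive-definite genus of discriminant $36$ has a unique isometry class, or invokes Nikulin's uniqueness criteria for primitive embeddings of $T(Y)$ into the cubic fourfold lattice), we conclude $A(Y) \cong \langle \eta_Y\rangle \oplus A_2(2)$, where $\eta_Y$ is the square of the hyperplane class on $Y$.

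Next I translate the lattice data into geometric classes. Choose a basis $v_1, v_2$ of $A(Y)_{\mathrm{prim}} \cong A_2(2)$ with $v_i^2 = 4$ and $v_1 \cdot v_2 = -2$, and set $\tau_i = \eta_Y - v_i$. A direct calculation gives $\tau_i^2 = 7$, $\tau_i \cdot \eta_Y = 3$, and $\tau_1 \cdot \tau_2 = 1$, matching the intersection pattern of two cubic surface scrolls in distinct hyperplanes with $T_1 \cdot T_2 = 1$. Since each rank-$2$ sublattice $\langle \eta_Y, \tau_i \rangle$ has discriminant $12$, the cubic fourfold $Y$ lies in the Hassett divisor $\calC_{12}$; the integral Hodge conjecture for Hodge classes on cubic fourfolds in $\calC_{12}$ (Voisin), together with the lattice-theoretic characterisation of non-syzygetic cubics in Proposition~\ref{pNonsyzygetic} and \cite[Section~2]{BFMQ24}, then identifies the $\tau_i$ as classes of actual cubic surface scrolls in distinct hyperplanes. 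Hence $Y \in \calC_{nonsyz}$.

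The main obstacle is the final geometric step: promoting the numerical classes $\tau_i \in A(Y)$ with the correct intersection numbers to actual cubic surface scrolls on $Y$. This requires the known integral Hodge conjecture for Hodge classes on cubic fourfolds in $\calC_{12}$ and the scroll-recognition argument underlying Proposition~\ref{pNonsyzygetic}, rather than any purely Hodge-theoretic input from the Fourier-Mukai equivalence, which only sees the abstract lattice $A(Y)$.
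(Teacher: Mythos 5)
Your argument is correct and follows essentially the same route as the paper's: transfer the Hodge isometry of Addington--Thomas Hodge structures to the transcendental lattices, deduce that the algebraic lattice of $Y$ lies in the same genus as that of $X$ and is hence isometric to it, and then represent the resulting classes with $\tau_i^2=7$, $\tau_i\cdot\eta_Y=3$, $\tau_1\cdot\tau_2=1$ by cubic scrolls (the paper cites \cite[Lemma 2.11]{BFMQ24} for this last step). The only cosmetic difference is that the paper runs the genus/uniqueness argument on the even rank-two lattice $A(Y)_{\mathrm{prim}}\cong A_2(2)$, which is unique in its genus by \cite[Lemma A.7]{Mar23}, rather than on the odd rank-three lattice $A(Y)$; this sidesteps both the delicacy of discriminant forms for odd lattices and the (easy, since $A_2(2)$ has minimal norm $4$) verification that $\eta_Y$ must generate the $\langle 3\rangle$ summand of an abstract copy of $\langle 3\rangle\oplus A_2(2)$.
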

\begin{proof}
    Any Fourier-Mukai transform beween $X$ and $Y$ induces an isomorphism $\tH_X\simeq \tH_Y$ between Addington-Thomas Hodge structures, in turn giving a Hodge isometry between the transcendental lattices $T(\mathcal{A}_X)$ and $T (\mathcal{A}_Y)$ (as well as the N\'{e}ron-Severi lattices $N(\mathcal{A}_X)$ and $N (\mathcal{A}_Y)$) of these Hodge structures. Now $T(\mathcal{A}_X) \simeq T(X)(-1)$ and similarly for $Y$, so both $T(X) \oplus \langle h_X^2 \rangle$ and $T(Y) \oplus \langle h_Y^2 \rangle$ are isomorphic primitively embedded lattices inside $I_{21, 2} = \langle 1 \rangle^{\oplus 21} \oplus \langle -1 \rangle^{\oplus 2}$, the abstract lattice isomorphic to the middle cohomology of a cubic fourfold. Hence their orthogonal complements, which are $A(X)_{\mathrm{prim}}$ and $A(Y)_{\mathrm{prim}}$, have the same genus, for example since they are even lattices with the same discriminant groups and signatures \cite[Cor. 1.9.4]{nikulin}. Since $A_2(2)$ is unique in its genus using \cite[Lemma A.7]{Mar23} and $A(X)_{\mathrm{prim}} \simeq A_2(2)$, we have $A(Y)_{\mathrm{prim}} \simeq A_2(2)$, too. A very general such cubic is non-syzygetic by Lemma \ref{lScrollRep}. 
\end{proof}
\begin{lemma}\label{lScrollRep}
    Let $X$ be a cubic fourfold with $A(X)_{\mathrm{prim}}=A_2(2).$ Then $X\in \calC_{nonsyz}.$
\end{lemma}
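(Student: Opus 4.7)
The plan is to extract two cubic surface scrolls $T_1, T_2 \subset X$ with $T_1 \cdot T_2 = 1$ in distinct hyperplanes directly from the given lattice hypothesis $A(X)_{\mathrm{prim}} \cong A_2(2)$. First, I would fix an integral basis $\{e_1, e_2\}$ of $A_2(2) \subset A(X)_{\mathrm{prim}}$ with $e_i^2 = 4$ and $e_1 \cdot e_2 = -2$, and define candidate classes $T_i := \eta + e_i \in A(X)$, where $\eta = h^2$. A direct computation then gives $T_i \cdot \eta = 3$, $T_i^2 = 7$, and $T_1 \cdot T_2 = 1$, matching exactly the numerical invariants of a non-syzygetic pair of cubic surface scrolls as in the Gram matrix presented at the start of the section.

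Second, each sublattice $\langle \eta, T_i\rangle \subset A(X)$ is a primitive rank-$2$ sublattice of discriminant $12$, so $X$ lies in the Hassett divisor $\calC_{12}$. I would then invoke Hassett's representability theorem for $\calC_{12}$ to conclude that each class $T_i$ is represented by an actual cubic surface scroll on $X$, producing two scrolls $T_1, T_2 \subset X$ with intersection number $1$. The hyperplane $H_i \subset \mathbb{P}^5$ spanned by the scroll $T_i$ is uniquely determined. The distinctness $H_1 \ne H_2$ would follow from the minimality of $A(X)_{\mathrm{prim}}$: if both scrolls lay in a common cubic threefold $Y = X \cap H$, the extra incidence data (in particular the curve $T_1 \cap T_2 \subset Y$ and the enhanced Picard rank of $Y$) would force additional algebraic classes to appear in $A(X)_{\mathrm{prim}}$ via Gysin pushforward into $H^{2,2}(X, \mathbb{Z})$, contradicting the hypothesis that $A(X)_{\mathrm{prim}}$ equals $A_2(2)$ exactly.

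The main obstacle is the representability step: ensuring that both numerical classes $T_1, T_2$ are simultaneously represented by honest cubic surface scrolls requires applying Hassett's theorem to each of the two sublattices $\langle \eta, T_i\rangle$ coherently, and verifying that the resulting scrolls span distinct hyperplanes. An alternative, more algebraic route that bypasses some of this subtlety is via Proposition \ref{pNonsyzygetic}: one would show that the hypothesis $A(X)_{\mathrm{prim}} = A_2(2)$ forces $X$ to admit an equation of the form $\det(M) + L_1 L_2 L_3 = 0$ with $L_1, L_2$ linearly independent, for instance via a dimension count establishing that the constructible locus of cubics admitting such an equation is dense in the irreducible $A_2(2)$-Noether-Lefschetz locus containing $X$. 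Non-syzygeticity of $X$ would then follow immediately from the second direction of Proposition \ref{pNonsyzygetic}.
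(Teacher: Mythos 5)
Your first paragraph is precisely the paper's proof: take a basis $\{\alpha_1,\alpha_2\}$ of $A_2(2)$, set $\tau_i=\eta_X+\alpha_i$, compute $\tau_i^2=7$, $\eta_X\cdot\tau_i=3$, $\tau_1\cdot\tau_2=1$, and conclude by representing these classes by cubic scrolls. The only substantive divergence is in how the representability is justified. The paper invokes the argument of \cite[Lemma 2.11]{BFMQ24}, which shows that a class with these numerical invariants on the given $X$ is represented by an actual cubic surface scroll; your appeal to ``Hassett's representability theorem for $\calC_{12}$'' does not quite do this job, since Hassett's results give the lattice-theoretic characterisation of membership in $\calC_{12}$ and the geometric description of its \emph{general} member, not the representation of two specific classes $\tau_1,\tau_2$ by scrolls on an arbitrary member. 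So the citation needs to be replaced, but the mathematical content of the step is the same. Your concern about the two scrolls spanning distinct hyperplanes is legitimate (the paper simply asserts that the pair is non-syzygetic ``by above''), but the Gysin-pushforward argument you sketch for distinctness is not developed enough to count as a proof; note also that the lemma is applied to an arbitrary $X$ with $A(X)_{\mathrm{prim}}=A_2(2)$, not only a very general one, so an argument via density in the Noether--Lefschetz locus, as in your proposed alternative route through Proposition \ref{pNonsyzygetic}, would not suffice without an additional specialisation argument.
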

\begin{proof}
    Let $\{\alpha_1, \alpha_2\}$ be a basis of $A(X)_{\mathrm{prim}}=A_2(2),$ and let $\tau_i=\eta_X+\alpha_i\in A(X).$ Then $\tau_i^2=7, \eta_X\cdot \tau_i=3$ and $\tau_1\cdot \tau_2=1.$ By applying the argument of \cite[Lemma 2.11]{BFMQ24}, the classes $\tau_1, \tau_2$ are represented by cubic scrolls. They form a non-syzygetic pair by above.
\end{proof}

Fix a very general $X\in \calC_{nonsyz}$ and let 
$$T:=T(X)(-1), \text{ and } S:=N(\calA_X)\cap A_2(X)^\perp\cong A(X)_{prim}=A_2(-2).$$

We define $\calM_{S,T}$ to be the collection of even overlattices $L\supset S\oplus T$ that satisfy:
\begin{enumerate}
    \item $S\oplus T\subset L\subset S^*\oplus T^*;$
    \item $S,T$ both saturated in $L$;
    \item $L$ has discriminant 3.
\end{enumerate}
Then $L\cong H^4(X,\bZ)_{prim}$ as abstract lattices, but the Hodge structures for each $L\in \calM_{S,T}$ will be different. 

Recall that for a very general $X\in \calC_{nonsyz}$, the Hodge structure $T(X),$ and thus $T$, only admits $\pm \id_T$ as Hodge isometries (see for example \cite[Section 14]{BFvGS}). On the other hand, the isometry group of $S$ is $$O(S)\cong S_3\times \bZ/2\bZ,$$ where $S_3\cong W(A_2)$ permutes the roots of $S\cong A_2(-2)$ and $\bZ/2\bZ$ acts as $\pm \id_{S}.$

Assume $Y\in \mathrm{FM}(X)$. Then there exists a Hodge isometry $\psi: T\rightarrow T(Y)(-1)= : T_Y$ (there are thus two such choices).
Since $Y\in \calC_{nonsyz}$, there exists an isometry $$\phi:S\rightarrow S_Y:=A(Y)_{prim}(-1),$$ with $\phi\in O(S).$
These induce an isometry on the dual spaces:
$$\phi^*\oplus \psi^*: S_Y^*\oplus T_Y^*\rightarrow S^*\oplus T^*.$$

Since $Y$ is a Fourier-Mukai partner of $X$, $A_2(Y)^\perp\subset \tH_Y$ is an element of the set $\calM_{S_Y, T_Y}$. We define
$$L_{Y,\phi, \psi}:=(\phi^*\oplus \psi^*)(A_2(Y)^\perp).$$

We see that $L_{Y,\phi,\psi}\in \calM_{S,T}$. 
Let 
$$\widetilde{\mathrm{FM}(X)}:=\{(Y,\phi,\psi)|Y\in \mathrm{FM}(X), \phi:S\simeq S_Y, \psi:T\simeq T_Y\}.$$
We have the diagram:
\begin{equation}\label{eDiagramFM}
\begin{tikzcd}
    \widetilde{\mathrm{FM}(X)} \arrow[r, "L_\bullet"] \arrow[d, "\Pi"] & \calM_{S,T}\arrow[ld, "\Pi'"]\\
    \mathrm{FM}(X) &
\end{tikzcd}
\end{equation}
The existence of $\Pi'$ follows from the Torelli theorem. Thus the map $L_\bullet$ maps distinct fibres of $\Pi$ to disjoint subsets of $\calM_{S,T}$. 
Note that $|\widetilde{\mathrm{FM}}(X)|=24|\mathrm{FM}(X)|.$

Note that we have the following chain of inclusions:
$$S\oplus T\subset L\subset L^\ast\subset S^\ast\oplus T^\ast.$$ This gives the following: $[L:S\oplus T]=[S^\ast\oplus T^\ast :L^\ast]=12.$ 
The overlattice $S\oplus T\subset L$ is determined by the \textit{glueing subgroup} $H:=L/(S\oplus T)\leq D(S).$ First we claim that $H\cong D(S).$ To see this we note that since $S\subset L$ is saturated, the map from $L^\ast/(S\oplus T)\rightarrow D(S)$ is a surjection. Similarly, $L^\ast/ S\oplus T\rightarrow D(T)$ is an isomorphism since $T$ is saturated in $L$ and the two groups have the same order. Thus $H$ is an index three subgroup of $L^\ast/(S\oplus T)$, and hence $H\cong D(S).$

\begin{lemma}\label{lLatticeCount}
We have
     $|\calM_{S,T}|=24.$
\end{lemma}
\begin{proof}
In order to count the possible lattices $L\in \calM_{S,T}$ we use \cite[Proposition 1.5.1]{nikulin}. In other words, we count the number of primitive embeddings of $S$ into $L$ that have gluing subgroup $H=D(S).$ Such a primitive embedding is determined by the following data:
\begin{itemize}
    \item A subgroup $H'\leq D(T)$ that is anti-isometric to $H$.
    \item an isometry $\gamma \in O(H).$
\end{itemize}
The isometry group of $H$ is $\GL_2(\bZ/2)\times \bZ/2\bZ,$ thus there are $12$ possibilities for $\gamma.$ Finally, there are two possible subgroups of $D(T)$ that are anti-isometric to $H$ - these correspond to choosing one of the two $\bZ/3\bZ$ factors of $D(T)$. Note that the diagonal copy of $\bZ/3\bZ$ is not anti-isometric to the copy of $\bZ/3\bZ\leq H.$  Thus the number of overlattices is 24.
\end{proof}

Let $G_S = S_3 \times \mathbb{Z}/2\mathbb{Z}$ be the isometry group of $S$, and let $G_T = \mathbb{Z}/2\mathbb{Z}$ acting on $T$ via (Hodge) isometries given by global scalings by a sign. Then $G = G_S \times G_T$ acts on $S \oplus T$ in the natural way, and this induces actions on $S^* \oplus T^*$ and 
\[
(S^*/S )\oplus (T^*/T) = D(S) \times D(T) = \bigl(  (\mathbb{Z}/2\mathbb{Z})^2 \times \mathbb{Z}/3\mathbb{Z} \bigr) \times \bigl(  (\mathbb{Z}/2\mathbb{Z})^2 \times \mathbb{Z}/3\mathbb{Z} \times \mathbb{Z}/3\mathbb{Z} \bigr).  
\]
Corresponding to the factors on the right hand side of the previous displayed formula, we will write elements of $D(S) \times D(T)$ as 
\[
((a, b), (a' , b', b'')), \quad a, a' \in ( \mathbb{Z}/2\mathbb{Z})^2 , \: b, b', b'' \in \mathbb{Z}/3\mathbb{Z}
\]
in the sequel. Note that the factor $S_3$ in $G_S$ acts on the copy of $(\mathbb{Z}/2\mathbb{Z})^2$ in $D(S)$ via permuting the three nontrivial elements and fixing the identity, whereas the factor $\mathbb{Z}/2\mathbb{Z}$ in $G_S$ acts by sign changes in the copy of $\mathbb{Z}/3\mathbb{Z}$ in $D(S)$. The action of $G_T$ is just by sign changes in $\mathbb{Z}/3\mathbb{Z} \times \mathbb{Z}/3\mathbb{Z} \subset D(T)$. 
\begin{lemma}\label{lFibresLBullet}
    $L_{Y,\phi_1,\psi_1}\neq L_{Y, \phi_2, \psi_2}$ for $(\phi_i, \psi_i)\in G_S \times G_T$ unless $(\phi_1, \psi_1) = \pm (\phi_2, \psi_2)$. Hence each nonempty fibre of $L_{\bullet}$ has cardinality $2$. 
\end{lemma}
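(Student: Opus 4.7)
\medskip
\noindent\emph{Plan of proof.} My strategy is to compute the stabiliser, inside $G_S \times G_T$, of a single lattice $L := L_{Y,\phi_1,\psi_1} \in \mathcal{M}_{S,T}$, and to show that it equals $\{\pm(\mathrm{id}_S,\mathrm{id}_T)\}$. Setting $\sigma := \phi_2 \phi_1^{-1}$ and $\tau := \psi_2 \psi_1^{-1}$, the equality $L_{Y,\phi_1,\psi_1} = L_{Y,\phi_2,\psi_2}$ is equivalent to the isometry $\sigma^\ast \oplus \tau^\ast$ of $S_Y^\ast \oplus T_Y^\ast$ preserving the overlattice $A_2(Y)^\perp$; transporting structure via $\phi_1,\psi_1$, this is exactly the stabiliser condition on $L$ for the pair $(\sigma,\tau)$ viewed in $G_S \times G_T$ acting on $S^\ast \oplus T^\ast$.

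Two inputs then conspire to pin $(\sigma,\tau)$ down. First, since $X$ is very general, so is its transcendental Hodge structure, and the only Hodge isometries of $T \cong T(X)(-1)$ are $\pm\mathrm{id}$; hence $\tau = \pm \mathrm{id}_{T_Y}$. Second, I would exploit the glue structure: the subgroup $H := L/(S \oplus T) \subset D(S) \oplus D(T)$ has order $12 = |D(S)|$ and, because both $S$ and $T$ are saturated in $L$, it is the graph of an anti-isometry $\gamma \colon D(S) \xrightarrow{\sim} H_T$ onto a subgroup $H_T \leq D(T)$. The condition that $(\sigma,\tau)$ preserves $L$ then descends to $\bar\tau(H_T) = H_T$ together with $\bar\tau|_{H_T} \circ \gamma = \gamma \circ \bar\sigma$, where $\bar\sigma$ and $\bar\tau$ are the induced isometries on $D(S)$ and $D(T)$. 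Inserting $\bar\tau|_{H_T} = \pm \mathrm{id}_{H_T}$ and using that $\gamma$ is a bijection, this forces $\bar\sigma = \pm \mathrm{id}_{D(S)}$, with the two signs matching.

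The final step is to lift $\bar\sigma = \pm\mathrm{id}_{D(S)}$ back to $O(S) = S_3 \times \mathbb{Z}/2\mathbb{Z}$ acting on $D(S) = (\mathbb{Z}/2\mathbb{Z})^2 \oplus \mathbb{Z}/3\mathbb{Z}$. The Weyl factor $S_3$ acts faithfully on the $(\mathbb{Z}/2\mathbb{Z})^2$ summand by permuting its three non-trivial elements and trivially on $\mathbb{Z}/3\mathbb{Z}$, while the global $-\mathrm{id}_S$ acts trivially on $(\mathbb{Z}/2\mathbb{Z})^2$ (since $-1=1$ in $\mathbb{Z}/2\mathbb{Z}$) and by negation on $\mathbb{Z}/3\mathbb{Z}$. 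Consequently $\bar\sigma = +\mathrm{id}$ forces $\sigma = \mathrm{id}_S$ and $\bar\sigma = -\mathrm{id}$ forces $\sigma = -\mathrm{id}_S$, with the matching $\tau$ determined by the sign. This yields $(\phi_2,\psi_2) = \pm(\phi_1,\psi_1)$, proving the first claim; the cardinality statement then follows, because $+\mathrm{id}$ and $-\mathrm{id}$ are genuinely distinct in both $G_S$ and $G_T$ and the pair $(-\mathrm{id}_S,-\mathrm{id}_T)$ always acts trivially on any sublattice.

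The step I expect to be most delicate is the explicit analysis of the natural map $O(S) \to O(D(S))$: verifying the faithfulness of the Weyl action on the $2$-torsion of $D(A_2(-2))$ and pinning down the full kernel requires genuine work with the Gram presentation of $A_2(-2)$, and any additional kernel element there would enlarge the stabiliser and spoil the count.
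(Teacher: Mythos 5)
Your proposal is correct and follows essentially the same route as the paper: both arguments reduce the claim to computing the stabiliser, inside $G_S\times G_T$, of the glue subgroup $H=L/(S\oplus T)\subset D(S)\oplus D(T)$, using that $H$ is the graph of an anti-isometry $\gamma$, that the $S_3$-factor is detected faithfully on the $2$-torsion of $D(S)$ (i.e.\ $W(A_2)\to \mathrm{GL}(A_2/2A_2)$ is an isomorphism), and that the signs are detected on the $3$-torsion. The only difference is packaging: you phrase the last step as injectivity of $O(S)\to O(D(S))$ combined with the intertwining relation $\bar\tau\circ\gamma=\gamma\circ\bar\sigma$, whereas the paper checks the two torsion components directly in coordinates; the facts you flag as delicate (faithfulness of the Weyl action on the $2$-part, triviality on the $3$-part, and $-\mathrm{id}$ acting trivially on the $2$-part) are all true, so there is no gap.
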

\begin{proof}
The lattice $L$ is determined by the subgroup $L /(S \oplus T) \subset (S^*/S) \times (T^*/T)$ and we know that the subgroups $L/(S\oplus T)$ arising in this way are of the following form:
\[
H_{\alpha, \beta} = \bigl\{  ((a, b), (\alpha (a)  , \beta (b) , 0)) \, \mid \, a \in ( \mathbb{Z}/2\mathbb{Z})^2, b \in   \mathbb{Z}/3\mathbb{Z} \bigr\}
\]
or 
\[
H'_{\alpha, \beta} = \bigl\{  ((a, b), (\alpha (a)  , 0 , \beta (b))) \, \mid \, a \in ( \mathbb{Z}/2\mathbb{Z})^2, b \in   \mathbb{Z}/3\mathbb{Z} \bigr\}
\]
for some (fixed, but arbitrary) $\alpha \in \mathrm{Aut}((\mathbb{Z}/2\mathbb{Z})^2), \: \beta \in \mathrm{Aut}(\mathbb{Z}/3\mathbb{Z})$. It suffices to prove that if for $(\phi, \psi) \in G_S \times G_T$ we have 
\[
(\phi, \psi) \bigl( H_{\alpha, \beta} \bigr) = H_{\alpha, \beta} \quad \mathrm{or} \quad (\phi, \psi) \bigl( H'_{\alpha, \beta} \bigr) = H'_{\alpha, \beta}  
\]
then $(\phi, \psi) = \pm \mathrm{id}$. First note that under $(\phi, \psi)$ the subgroup $( \mathbb{Z}/2\mathbb{Z})^2 = \{ (a, \alpha (a)) \} \subset ( \mathbb{Z}/2\mathbb{Z})^2 \times ( \mathbb{Z}/2\mathbb{Z})^2 \subset D(S) \times D(T)$ gets mapped to the subgroup 
\[
\{ (\phi (a), \alpha (a)) \} = \{ (a, \alpha ( \phi^{-1} (a))) \}
\]
which is different from the subgroup $\{ (a, \alpha (a)) \}$ unless $\phi \in \mathbb{Z}/2\mathbb{Z} \subset G_S$. Then suppose that $\phi \in \mathbb{Z}/2\mathbb{Z} \subset G_S$ and $\psi \in \mathbb{Z}/2\mathbb{Z} = G_T$. 
Then $(\phi, \psi )$ maps the $\mathbb{Z}/3\mathbb{Z}$-factor of $H_{\alpha, \beta}$ or $H'_{\alpha, \beta}$ into a different subgroup $\mathbb{Z}/3\mathbb{Z}$ of the $3$-torsion subgroup $\mathbb{Z}/3\mathbb{Z} \times \mathbb{Z}/3\mathbb{Z} \times \mathbb{Z}/3\mathbb{Z} $ of $D(S) \times D(T)$ unless $(\phi, \psi) = \pm \mathrm{id}$, which proves the Lemma. 
\end{proof}
 
\begin{proof}[Proof of Theorem \ref{tFMPartners}:]
The diagram \ref{eDiagramFM}, together with Lemma \ref{lFibresLBullet} and Lemma \ref{lLatticeCount}, shows that the number of elements in $\mathrm{FM}(X)$ will be equal to two, provided the map $L_\bullet$ is surjective. It thus remains to prove this surjectivity. Let $L\in \calM_{S,T}$, and denote by $I_{21,2}$ the unimodular odd lattice of signature $(21,2)$. By \cite[Cor. 1.13.3]{nikulin}, if $v\in I_{21,2}$ is a class with $v^2=3$, then $v^{\perp} \subset I_{21,2} $ is the unique even lattice of discriminant $3$ and signature $(20,2)$. Thus for any $L\in \mathcal{M}_{S, T}$, $L(-1)$ is isomorphic to this lattice. 
 Using this isomorphism, the lattice $T(-1)$ is a sublattice of $I_{21,2}$, and $T(-1)^\perp$ contains $\langle v \rangle \oplus S$, which has the same discriminant group, hence is equal to $T(-1)^\perp$, so this is the lattice:
$$\left(\begin{matrix}
    3 & 3& 3\\
    3 & 7 & 1\\
    3 & 1 & 7
\end{matrix}\right).$$ In particular, by \cite[Theorem 1.1]{laz09}, there exist a cubic fourfold $Y$ with $\eta:H^4(Y,\bZ)_{prim}\cong L(-1)$ as Hodge structures. By construction, $Y$ is a Fourier-Mukai partner of $X$, and hence the restrictions of $\eta$ to $S(-1)$ and $T(-1)$ induce a triple $(Y,\phi,\psi ) \in \widetilde{\mathrm{FM}}(X)$ such that $L_\bullet(Y, \phi,\psi)=L$, proving the surjectivity. 
\end{proof}

\section{Gale dual non-syzygetic cubic fourfolds and Lagrangian construction data}\label{sLagrangian}

A general reference for the Gale transform and Gale duality is \cite{EP00}. 

\begin{definition}\label{dGaleDualCubics}
Given a non-syzygetic cubic fourfold $X$ with an equation of the form 
\[ 
\det (M) + L_1L_2L_3 =0,
\] 
we define the \emph{Gale dual cubic} $X'$ by the following procedure. Collecting the coefficients with respect to coordinates $X_0, \dots , X_5$ in $\mathbb{P}^5=\mathbb{P}(V)$ into a matrix, the $9$ linear forms $M_{ij}$ of $M$ and the three $L_i$ can be considered as a linear map
\[
	\C^{12} \xrightarrow{(M_{11}, \dots, M_{33},L_1,L_2,L_3)} \C^6.
\]
The kernel of this linear map is again $6$-dimensional and can be interpreted as $12$ linear forms in dual variables
\[
 	\C^6 \xrightarrow{(M'_{11}, \dots , M_{33}', L_1',L_2',L_3')^t} \C^{12} \xrightarrow{(M_{11}, \dots , M_{33},L_1,L_2,L_3)} \C^6 .
\]
We then define the Gale dual cubic $X'$ by the equation
\[
	\det(M') - L_1'L_2'L_3' = 0.
\]
It will turn out that $X'$ is the Fourier-Mukai partner of $X$ constructed in \cite{Brooke:2024aa}. 
\end{definition}

Note that changing the order of the $L_i$ does change the order of the $L_i'$ in the same way, but neither the equation of $X$ or $X'$.

Notice that we can view $M$ formally as a map
\[
	E \otimes F^* \xrightarrow{M}  V^*
\]
where $E$ and $F^*$ are the three-dimensional row and column spaces of $M$. With this notation, the above diagram becomes 
\[
V^{\perp} \xrightarrow{(M', L_1',L_2',L_3')^t} E \otimes F^* \oplus \mathbb{C}^3  \xrightarrow{(M, L_1,L_2,L_3)} V^* 
\]
where we use the usual notation $V^{\perp}$ for the kernel of a surjective map $W \to V^*$.

Suppose we are given two Gale dual cubics with equations as in Definition \ref{dGaleDualCubics}. We aim to show that one can associate to this setup, in addition to $E$ and $F$, three Lagrangian subspaces $A_i \subset \Lambda^3 (E\oplus F)$, $i=1,2,3$, of a special type that we call $\rho$-Lagrangians. Conversely, given a $\rho$-Lagrangian $A \subset \Lambda^3 (E \oplus F)$, one can construct from this a pair of Gale dual cubics. The advantage of this is that it will establish a link between Gale dual cubics and the geometry of Gushel-Mukai fourfolds that are period partners, and in particular, it will allow us to conclude that $X$ and $X'$ are birational and Fourier-Mukai partners using the results from \cite{KuzPerryCatCones}. 

\medskip

We start with some linear algebra. Let $E$ and $F$ be three-dimensional complex vector spaces with orientations $\omega_E \in \Lambda^3 E$ and $\omega_F \in \Lambda^3 F$. We consider $\Lambda^3 (E\oplus F)$ as a symplectic vector space with skew-form given by wedge product. We have $\Lambda^3 (E\oplus F) = U_E \oplus U_F$ with 
\[
	U_E =  (\Lambda^2 E \otimes F) \oplus \Lambda^3 E
	\quadand
	U_F =   (E \otimes \Lambda^2 F)  \oplus  \Lambda^3 F.
\] 
Using the orientations we can identify $U_F = U_E^*$. 

\begin{definition}\label{dRhoLagrangian}
Let $A\subset \Lambda^3 (E\oplus F)$ be a vector subspace, 
$A_E = A \cap U_E$ and 
$A_F = A \cap U_F$. $A$ is called a $\rho$-Lagrangian if 
\begin{enumerate}
\item
$A$ is Lagrangian.
\item
$\dim A_E = \dim A_F =4$. 
\end{enumerate}
We call the data $(A,E,F)$ a $\rho-$Lagrangian data set.
\end{definition}

Suppose that we are given Gale dual cubics $X, X'$ with equations $\det (M) + L_1L_2L_3 =0$ and $\det (M') - L_1'L_2'L_3' =0$. We will now describe how to construct a $\rho$-Lagrangian from this, given a choice of one of the linear forms $L_1, L_2, L_3$, without loss of generality $L_1$. The corresponding Lagrangian will be $A_1$. 

A pair of Gale dual cubics is the same as a diagram 
\[
	V^\perp \to E\otimes F^* \oplus \C^3 \to V^*.
\]
One can choose splittings such that this becomes 
\[
	\mathbb{C}^4 \oplus \C \oplus \C \xrightarrow{\widehat{P}} (E\otimes F^* \oplus \mathbb{C}) \oplus \mathbb{C} \oplus \C  \xrightarrow{\widehat{Q}^t} \mathbb{C}^4 \oplus \C \oplus \C
\]
with $\widehat{Q}$ and $\widehat{P}$ both $12\times 6$ matrices of the form 
\[
\widehat{Q} = \begin{pmatrix}
	Q_2 & Q_4& Q_3 \\
	0& 1& 0 \\
	0 & 0 & 1
\end{pmatrix}
\quad \text{ and }\quad
\widehat{P} = \begin{pmatrix}
	P_1 & P_3 & P_4 \\
	0& 1& 0 \\
	0 & 0 & 1
\end{pmatrix}.
\]
The indexing in $\widehat{Q}$ and $\widehat{P}$ will become clearer and prove advantageous below. Notice that by assumption
\[ 
\widehat{Q}^t\widehat{P}
=0.
\]
Let $A_1$ be the image of the map $\C^4 \oplus \C^ 4 \oplus \C \oplus \C \to \Lambda^3 (E \oplus F) = U_E \oplus U_F$ given by 
\[
\begin{pmatrix}
Q_1 & Q_2 & Q_3 & Q_4 \\
P_1 & P_2 & P_3 & P_4 
\end{pmatrix} = \begin{pmatrix} Q \\ P \end{pmatrix}
\]
where $Q_1=P_2:=0$. Denote by 
\[
\alpha=\frac{1}{2}(Q^tP -P^tQ), \quad \sigma= \frac{1}{2}(Q^tP +P^tQ)
\]
the natural anti-symmetric and symmetric maps $A_1 \to A^*_1$, identifying $U_F = U_E^*$. Note that $\alpha$ corresponds to the restriction to $A_1$ of the anti-symmetric form given by wedge product on $\Lambda^3 (E \oplus F) $. 

\begin{proposition}\label{pRhoLagrangian}
The subspace $A_1$ is a $\rho$-Lagrangian, and we have 
\[
\sigma = \begin{pmatrix}
0 & 0 & 0 & 0\\
0 & 0 & 0 & 0\\
0 & 0 & 0 & -1\\
0 & 0 & -1 & 0
\end{pmatrix}.
\]
\end{proposition}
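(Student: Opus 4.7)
The plan is to derive everything as a direct block-matrix computation, starting from the orthogonality relation $\widehat{Q}^{t}\widehat{P}=0$ and the full-rank hypothesis $\operatorname{rk}\widehat{Q}=\operatorname{rk}\widehat{P}=6$ inherent to the Gale dual setup. First, I would expand $\widehat{Q}^{t}\widehat{P}=0$ block by block. The identity blocks in the bottom-right of $\widehat{Q}$ and $\widehat{P}$ imply that seven of the nine resulting equations reduce to $Q_{i}^{t}P_{j}=0$ for the pairs $(i,j)\in\{(2,1),(2,3),(2,4),(3,1),(3,3),(4,1),(4,4)\}$, while the remaining two read $Q_{3}^{t}P_{4}=Q_{4}^{t}P_{3}=-1$.

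Next, I would assemble $Q^{t}P$ as a $4\times 4$ block matrix indexed by the decomposition $\mathbb{C}^{10}=\mathbb{C}^{4}\oplus\mathbb{C}^{4}\oplus\mathbb{C}\oplus\mathbb{C}$. Since $Q_{1}=0$ and $P_{2}=0$, the first block row and the second block column of $Q^{t}P$ vanish automatically, and the remaining six block entries are supplied by the identities extracted above. One reads off directly that
\[
Q^{t}P=\begin{pmatrix}0 & 0 & 0 & 0\\ 0 & 0 & 0 & 0\\ 0 & 0 & 0 & -1\\ 0 & 0 & -1 & 0\end{pmatrix},
\]
which is symmetric. Hence $\alpha=\tfrac12(Q^{t}P-P^{t}Q)=0$, which is the isotropy condition for the wedge-product symplectic form, while $\sigma=\tfrac12(Q^{t}P+P^{t}Q)=Q^{t}P$ has exactly the form asserted in the proposition.

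To upgrade isotropy to Lagrangian, I would verify that $\dim A_{1}=10$. The rank-$6$ hypothesis on $\widehat{Q}$ and $\widehat{P}$ forces $Q_{2}$ and $P_{1}$ (each $10\times 4$) to be injective, and forces $(Q_{2},Q_{3},Q_{4})$ and $(P_{1},P_{3},P_{4})$ (each $10\times 6$) to have rank $6$. Therefore $\ker Q=\mathbb{C}^{4}\oplus 0\oplus 0\oplus 0$ and $\ker P=0\oplus\mathbb{C}^{4}\oplus 0\oplus 0$, so $\ker\begin{pmatrix}Q\\P\end{pmatrix}=\ker Q\cap\ker P=0$ and $A_{1}$ has the maximal possible dimension. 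The same kernel description gives the $\rho$-condition at no additional cost: $A_{E}=A_{1}\cap U_{E}$ consists of the pairs $(Qv,Pv)$ with $Pv=0$, i.e.\ with $v\in\ker P$, so $A_{E}=Q(\ker P)=\im(Q_{2})$ is $4$-dimensional, and symmetrically $\dim A_{F}=\dim\im(P_{1})=4$.

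The only real obstacle is notational bookkeeping: one has to keep the identification $U_{F}\cong U_{E}^{*}$ induced by the orientations $\omega_{E},\omega_{F}$ consistent so that $Q^{t}P$ is genuinely the matrix of the restricted bilinear form on $A_{1}\cong\mathbb{C}^{10}$, and thus that the computed $\alpha$ and $\sigma$ match the objects named in the statement. Once this is set up, the proposition reduces entirely to unwinding the identity $\widehat{Q}^{t}\widehat{P}=0$.
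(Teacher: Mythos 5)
Your computation is the same as the paper's: you expand $\widehat{Q}^{t}\widehat{P}=0$ block by block, read off that the only nonzero products are $Q_3^tP_4=Q_4^tP_3=-1$, assemble $Q^tP$ (using $Q_1=P_2=0$ for the vanishing first block row and second block column), and conclude that $\alpha=0$ and $\sigma$ has the stated form. You go further than the paper in one respect: the paper's proof stops at the $\sigma$/$\alpha$ computation and does not explicitly verify that $\dim A_1=10$ and $\dim A_E=\dim A_F=4$, whereas you supply this. One small imprecision there: the rank-$6$ hypothesis on $\widehat{Q}$ gives injectivity of $Q_2$ (since the identity block makes the last two columns independent of the first four), but it does \emph{not} by itself force $(Q_2,Q_3,Q_4)$ to have rank $6$ — e.g.\ $Q_3$ could lie in $\mathrm{im}(Q_2)$ without lowering $\mathrm{rk}\,\widehat{Q}$. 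The claim is still true, but you need the orthogonality relations to see it: if $Q_2v_2+Q_3v_3+Q_4v_4=0$, pairing with $P_3$ and $P_4$ and using $Q_2^tP_3=Q_2^tP_4=Q_3^tP_3=Q_4^tP_4=0$ and $Q_3^tP_4=Q_4^tP_3=-1$ kills $v_3,v_4$, and then injectivity of $Q_2$ kills $v_2$. With that repair your dimension count, and hence the whole argument, is complete.
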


\begin{proof}
We have
\[ 
0= \widehat{Q}^t\widehat{P}
=
\begin{pmatrix}
	Q_2^t & 0& 0 \\
	Q_4^t& 1 & 0 \\
	Q_3^t & 0 & 1
\end{pmatrix}
\begin{pmatrix}
	P_1 & P_3 & P_4 \\
	0&  1& 0 \\
	0 & 0 & 1
\end{pmatrix}
= 
\begin{pmatrix}
	Q_2^tP_1 & Q_2^tP_3 & Q_2^tP_4 \\
	Q_4^tP_1 & Q_4^tP_3+1 & Q_4^tP_4 \\
	Q_3^tP_1 & Q_3^tP_3 & Q_3^tP_4+1 \\
\end{pmatrix}
\]
and therefore 
\[
Q^tP = \begin{pmatrix}
 0 & 0 & 0 & 0 \\
 Q^t_2P_1 & 0 & Q_2^t P_3 & Q_2^t P_4 \\
  Q^t_3P_1 & 0 & Q_3^t P_3 & Q_3^t P_4 \\
   Q^t_4P_1 & 0 & Q_4^t P_3 & Q_4^t P_4 
\end{pmatrix} =
\begin{pmatrix}
0 & 0 & 0 & 0\\
0 & 0 & 0 & 0\\
0 & 0 & 0 & -1\\
0 & 0 & -1 & 0
\end{pmatrix}.
\]
Therefore $\alpha=0$ and $\sigma$ as claimed. 
\end{proof}

Conversely, suppose we are given a $\rho$-Lagrangian $A \subset \Lambda^3 (E\oplus F) = U_E \oplus U_F$. We can choose a splitting
\[
A = A_E \oplus A_F \oplus \mathbb{C} \oplus \mathbb{C}
\]
such that the map given by the inclusion
\[
A_E \oplus A_F \oplus \mathbb{C} \oplus \mathbb{C} \to U_E \oplus U_F
\]
is of the form 
\[
\begin{pmatrix}
Q_1 & Q_2 & Q_3 & Q_4 \\
P_1 & P_2 & P_3 & P_4 
\end{pmatrix} = \begin{pmatrix} Q \\ P \end{pmatrix}
\]
with $Q_1=P_2=0$. 
Since $A$ is a Lagrangian, $\alpha =0$, and hence $Q^tP$ is symmetric and therefore 
\[
\sigma= 
Q^tP = \begin{pmatrix}
 0 & 0 & 0 & 0 \\
 Q^t_2P_1 & 0 & Q_2^t P_3 & Q_2^t P_4 \\
  Q^t_3P_1 & 0 & Q_3^t P_3 & Q_3^t P_4 \\
   Q^t_4P_1 & 0 & Q_4^t P_3 & Q_4^t P_4 
\end{pmatrix} =
\begin{pmatrix}
0 & 0 & 0 & 0\\
0 & 0 & 0 & 0\\
0 & 0 &  Q_3^t P_3 & Q_3^t P_4\\
0 & 0 & Q_4^t P_3 & Q_4^t P_4 
\end{pmatrix}
\]
with $Q_4^tP_3 = Q_3^tP_4$. After a change of basis in $\C \oplus \C$ we can assume 
\[
	(Q_3,Q_4)^t (P_3,P_4) = \begin{pmatrix} 0 & -1 \\ -1 & 0 \end{pmatrix}. 
\]
Since $A$ is $\rho$-Lagrangian, $Q_1=P_2=0$. Then reading the proof of Proposition \ref{pRhoLagrangian} backwards, we get $\widehat{Q}^t\widehat{P}=0$. Thus we obtain Gale dual cubics defined by $\widehat{P}, \widehat{Q}$. 

\begin{remark}\label{rSameLagrangians}
Note that interchanging the roles of $X$ and $X'$ in the above discussion interchanges $P$ and $Q$, and $E$ and $F$ at the same time, and therefore leads to the same set of Lagrangians $A_i$. 
\end{remark}

%%%%%%%%%
\section{Representation-theoretic intermezzo}\label{sRepTheory}

Let $V_6$ be a six-dimensional complex vector space. We consider the following group actions on $V_6$:

\begin{enumerate}
\item 
Suppose $V_6 = V_1 \oplus V_2 \oplus V_2' \oplus V_1'$ is a decomposition into subspaces of the dimensions indicated by the subscripts. Then 
\[
G_{1,2,2,1} = \mathrm{SL}(V_1) \times \mathrm{SL}(V_2) \times \mathrm{SL}(V_2') \times \mathrm{SL}(V_1')
\]
acts on $V_6$ with this splitting. 

\item 
Let $V_5= V_2 \oplus V_2' \oplus V_1'$ so that $V_6 = V_1 \oplus V_5$. Then 
\[
G_{1,5} = \mathrm{SL}(V_1) \times \mathrm{SL}(V_5) 
\]
acts on $V_6$ with this splitting. 

\item 
Similarly,  let $V_5'= V_1\oplus V_2 \oplus V_2'$ so that $V_6 = V_5' \oplus V_1'$. Then 
\[
G_{5,1} = \mathrm{SL}(V_5') \times \mathrm{SL}(V_1') 
\]
acts on $V_6$ with this splitting. 
\item 
Lastly, let $E= V_1 \oplus V_2$ and $F= V_2' \oplus V_1'$. Then 
\[
G_{3,3} = \mathrm{SL}(E) \times \mathrm{SL}(F) 
\]
acts on $V_6$ with this splitting. 
\end{enumerate}

In the sequel we fix a basis $e_1, e_2, e_3, f_1, f_2, f_3$ of $V_6^*$ such that $V_1^*  =\langle e_1\rangle$, $V_2^* = \langle e_2, e_3\rangle$,  $(V_2')^* = \langle f_1, f_2\rangle$, $(V_1')^*= \langle f_3\rangle$. 

We let $R= \mathbb{C}[\Lambda^3 V_6]$. We are interested in the invariant ring $R^G$ for $G$ one of the groups above. We only need the computation up to degree $3$ in the sequel. 

\newcommand{\hate}{\widehat{e}}
\newcommand{\hatf}{\widehat{f}}
\newcommand{\hatu}{\widehat{u}}

\begin{definition}\label{dInvariants33}

Define $\hate_1, \hate_2, \hate_3 \in \Lambda^2 E^*$  and $\hatf_1,\hatf_2,\hatf_3 \in \Lambda^2 F^*$ by 
\[
    \hate_1 = e_2 \wedge e_3, \quad %\\
    \hate_2 = -e_1 \wedge e_3, \quad%\\
    \hate_3 = e_1 \wedge e_2 
\]
 
and similarly for $\hatf_i$. Set
\[
L_E :=e_1\wedge e_2\wedge e_3, \quad
L_F :=-f_1\wedge f_2\wedge f_3,
\]
and 
\[
M_E := (\hate_i \wedge f_j), \quad %_{i=1\dots3,j=1\dots3}, \quad 
M_F := (e_i \wedge \hatf_i).  %_{i=1\dots3,j=1\dots3}.
\]
We interpret $M_E$ and $M_F$ either as a $3 \times 3$ matrix or, according to context, as a $1 \times 9$ vector of linear forms, ordering the matrix entries lexicographically. 
\end{definition}

\begin{theorem}\label{tInvariants}
$R^{G_{3,3}}$ is generated up to degree three by $L_E,L_F,\tr(M_E M_F^t), \det M_E, \det M_F$.
\end{theorem}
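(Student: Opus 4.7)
The approach is representation-theoretic: decompose $S^d(\Lambda^3 V_6)^*$ for $d \le 3$ as a $G_{3,3}$-module and read off the invariant subspace.

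First I would record the $G_{3,3}$-equivariant splitting
$$(\Lambda^3 V_6)^* \;=\; \mathbb{C}\cdot L_E \;\oplus\; (E \otimes F^*) \;\oplus\; (E^* \otimes F) \;\oplus\; \mathbb{C}\cdot L_F,$$
obtained by dualising $\Lambda^3(E \oplus F) = \Lambda^3 E \oplus (\Lambda^2 E \otimes F) \oplus (E \otimes \Lambda^2 F) \oplus \Lambda^3 F$ and using the orientations $\omega_E, \omega_F$ to identify $\Lambda^2 E^* \cong E$ and $\Lambda^2 F^* \cong F$. Under this identification the entries of $M_E$ form a basis of $E \otimes F^*$ and the entries of $M_F$ a basis of $E^* \otimes F$. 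Expanding $S^d$ of a direct sum and pulling out powers of $L_E$ and $L_F$, the problem reduces to computing the invariants of $S^b(E \otimes F^*) \otimes S^c(E^* \otimes F)$ for all $b + c \le d$.

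Since $G_{3,3} = \mathrm{SL}(E) \times \mathrm{SL}(F)$ is a product of groups acting on disjoint tensor factors, these invariants split as tensor products of the individual $\mathrm{SL}$-invariants. Applying the Cauchy identity
$$S^n(V \otimes W) \;=\; \bigoplus_{\lambda \vdash n} S^\lambda V \otimes S^\lambda W,$$
together with the observation that $S^\lambda E$ is $\mathrm{SL}(E)$-trivial iff $\lambda$ is a rectangle of height three (so in degree $\le 3$ only $\lambda = (1,1,1)$), I would then run case by case through pairs $(b,c)$ with $b + c \le 3$. Only three cases yield an invariant: $(b,c) = (1,1)$, where the canonical pairing $(E \otimes E^*) \otimes (F^* \otimes F) \to \mathbb{C}$ gives the one-dimensional invariant $\tr(M_E M_F^t)$; $(b,c) = (3,0)$, where the summand $\Lambda^3 E \otimes \Lambda^3 F^*$ in the Cauchy decomposition yields $\det M_E$; and $(b,c) = (0,3)$, producing $\det M_F$ by symmetry.

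The bulk of the work is ruling out invariants in the remaining cases $(2,0), (0,2), (2,1), (1,2)$. The cases $(2,0)$ and $(0,2)$ are immediate from the Cauchy decomposition $S^2(E \otimes F^*) = S^2 E \otimes S^2 F^* \oplus \Lambda^2 E \otimes \Lambda^2 F^*$, since each of $S^2 E$, $S^2 F^*$, $\Lambda^2 E \cong E^*$, $\Lambda^2 F^* \cong F$ is a nontrivial irreducible. For $(2,1)$ (and symmetrically $(1,2)$) one expands $S^2(E \otimes F^*) \otimes (E^* \otimes F)$ as a sum of tensor products $A \otimes B$ with $A$ an $\mathrm{SL}(E)$-module and $B$ an $\mathrm{SL}(F)$-module, and checks in each summand that either $A^{\mathrm{SL}(E)} = 0$ or $B^{\mathrm{SL}(F)} = 0$; concretely, $S^2 E \otimes E^*$ contains no trivial summand because $\mathrm{Hom}_{\mathrm{SL}(E)}(E, S^2 E) = 0$, while $\Lambda^2 E \otimes E^* \cong E^* \otimes E^* = S^2 E^* \oplus \Lambda^2 E^*$ splits into nontrivial irreducibles only. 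This bookkeeping is the one step that genuinely needs to be verified; the main obstacle is organising it cleanly rather than any conceptual difficulty.

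Collecting all contributions, the $G_{3,3}$-invariants of degree $\le 3$ are exhausted by $L_E, L_F$ in degree one, by $L_E^2, L_E L_F, L_F^2$ and $\tr(M_E M_F^t)$ in degree two, and in degree three by the cubic monomials in $L_E, L_F$, the products $L_E \tr(M_E M_F^t)$ and $L_F \tr(M_E M_F^t)$, and the two determinants $\det M_E, \det M_F$. Every element of this list lies in the subring generated by $L_E, L_F, \tr(M_E M_F^t), \det M_E, \det M_F$, which proves the claim.
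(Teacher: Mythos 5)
Your argument is correct, and it takes a genuinely different route from the paper: the paper's proof of Theorem \ref{tInvariants} consists entirely of a reference to a Macaulay2 computation in the ancillary files, whereas you give a conceptual derivation. Your key steps all check out: the $G_{3,3}$-equivariant identification $(\Lambda^3 V_6)^* \cong \mathbb{C}\,L_E \oplus (E\otimes F^*) \oplus (E^*\otimes F) \oplus \mathbb{C}\,L_F$ via the orientations, the reduction to invariants of $S^b(E\otimes F^*)\otimes S^c(E^*\otimes F)$ for $b+c\le 3$, the Cauchy decomposition with the observation that for $\mathrm{SL}_3$ only $\lambda=(1,1,1)$ gives a trivial summand in degree $\le 3$, and the elimination of the mixed cases $(2,1)$ and $(1,2)$ using $\mathrm{Hom}_{\mathrm{SL}(E)}(E,S^2E)=0$ and $\Lambda^2E\otimes E^*\cong S^2E^*\oplus E$ having no trivial summand. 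The resulting list of invariants in each degree matches what the generators produce. What your approach buys is human verifiability and extra information: it shows the invariants $L_E$, $L_F$, $\tr(M_EM_F^t)$, $\det M_E$, $\det M_F$ each span a one-dimensional isotypic piece in their degree, and the method extends mechanically to higher degrees if needed. What the paper's computational proof buys is brevity and immunity to bookkeeping slips in the plethysm; if you write your argument up, the case analysis for $(2,1)$ and $(1,2)$ is the only place requiring care, and you have handled it correctly.
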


\begin{proof}
 See \cite[Sections5-8.m2]{MacaulayFiles}.
\end{proof}

\begin{definition}\label{d10Tuples}
Consider the $10$-tuples of linear forms on $\Lambda^3 V_6$
\[
%\begin{align*}
    (M_E,L_E) %&=  
     %(\hate_1\wedge f_1,\dots,\hate_3 \wedge f_3,e_1 \wedge e_2 \wedge %e_3) 
     =: (u_1,\dots,u_{10}),
     \quad
     (M_F,L_F) %&=  
     %(e_1\wedge \hatf_1,\dots,e_3 \wedge \hatf_3,-f_1 \wedge f_2 \wedge f_3) 
     =: (\hatu_1,\dots,\hatu_{10}). 
%\end{align*}
\]
We call
\[
        \sigma = \sum_{i=1}^{10} u_i \cdot \hatu_i \in R
\]
the \emph{$\sigma$-quadric} in what follows. 
\end{definition}

\begin{proposition}\label{pDualBases}
The $10$-tuples $(M_E,L_E)$ and $(M_F,L_F)$ 
are dual bases of
\[
    \Lambda^3 E^* \oplus \bigl(\Lambda^2 E^* \otimes F^*\bigr)  
    \quadand
    \bigl( E^* \otimes \Lambda^2 F^* \bigr) \oplus \Lambda^3 F^* 
\]
with respect to the pairing given by the wedge product and the orientation
$L_E \wedge L_F$. Furthermore, $\sigma$ is $G_{3,3}$-invariant and
\[
    \sigma = \tr(M_E M_F^t) + L_EL_F.
\]
\end{proposition}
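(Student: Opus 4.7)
The plan has three parts: the explicit computation of the Gram matrix of the two $10$-tuples (showing they are dual bases), the derivation of the identity $\sigma = \tr(M_E M_F^t) + L_E L_F$ as an immediate consequence, and a conceptual representation-theoretic argument for $G_{3,3}$-invariance. I would begin by recording the elementary identities
\[
\hat{e}_i \wedge e_j = \delta_{ij} L_E, \quad \hat{f}_i \wedge f_j = -\delta_{ij} L_F,
\]
where the sign in the second identity is forced by the convention $L_F = -f_1 \wedge f_2 \wedge f_3$, and then use these throughout.

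For the dual-bases claim, let $\langle \alpha, \beta \rangle$ be defined by $\alpha \wedge \beta = \langle \alpha, \beta \rangle \cdot L_E \wedge L_F$ for $\alpha, \beta \in \Lambda^3 V_6^*$. For $u = \hat{e}_a \wedge f_b$ and $\hat{u} = e_c \wedge \hat{f}_d$ I compute
\[
u \wedge \hat{u} = -\hat{e}_a \wedge e_c \wedge f_b \wedge \hat{f}_d = -(\delta_{ac} L_E)(-\delta_{bd} L_F) = \delta_{ac}\delta_{bd}\, L_E \wedge L_F,
\]
so $\langle u, \hat{u} \rangle = \delta_{ac}\delta_{bd}$; the sign from transposing $f_b$ past $e_c$ is cancelled by the sign built into $L_F$. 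The cross pairings $\langle \hat{e}_a \wedge f_b, L_F\rangle$ and $\langle L_E, e_c \wedge \hat{f}_d\rangle$ vanish because $L_F$ already exhausts $F^*$ and $L_E$ already exhausts $E^*$, while $\langle L_E, L_F\rangle = 1$ is tautological. With the lexicographic ordering of matrix entries and $L_E$, $L_F$ placed in slot $10$, these computations produce the identity as the Gram matrix.

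For the formula, duality gives
\[
\sigma = \sum_{k=1}^{10} u_k \hat{u}_k = \sum_{i,j=1}^{3} (\hat{e}_i \wedge f_j)(e_i \wedge \hat{f}_j) + L_E L_F,
\]
and the double sum is precisely $\sum_{i,j} (M_E)_{ij}(M_F)_{ij} = \sum_i (M_E M_F^t)_{ii} = \tr(M_E M_F^t)$. For $G_{3,3}$-invariance, I would argue as follows: the subspaces $W_1 = \Lambda^3 E^* \oplus (\Lambda^2 E^* \otimes F^*)$ and $W_2 = (E^* \otimes \Lambda^2 F^*) \oplus \Lambda^3 F^*$ are $G_{3,3}$-stable in $\Lambda^3 V_6^*$, and the orientation $L_E \wedge L_F$ is fixed by $G_{3,3}$ since $\mathrm{SL}(E)$ fixes $L_E$ and $\mathrm{SL}(F)$ fixes $L_F$. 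Hence the pairing $W_1 \otimes W_2 \to \mathbb{C}$ is equivariant and identifies $W_2$ with $W_1^*$ as $G_{3,3}$-modules. The element $\sum u_k \hat{u}_k \in W_1 \otimes W_2$ corresponds to $\mathrm{id}_{W_1}$ under this identification, hence is $G_{3,3}$-invariant, and so is its image in $\mathrm{Sym}^2(\Lambda^3 V_6^*) \subset R$ under the equivariant multiplication map, which is exactly $\sigma$.

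The main obstacle is purely clerical: keeping the signs straight. In particular the $-1$ in the definition of $L_F$ is the point of the whole set-up, as it is precisely what absorbs the sign picked up when transposing $f_b$ past $e_c$ and thereby makes the Gram matrix literally the identity. Once this sign bookkeeping is clean the rest reduces to the standard observation that the canonical element of $W_1 \otimes W_1^*$ representing $\mathrm{id}_{W_1}$ is invariant under any group acting on $W_1$.
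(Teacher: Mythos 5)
Your proof is correct and follows essentially the same route as the paper's: the same elementary identities $\hate_i\wedge e_j=\delta_{ij}L_E$, $f_i\wedge\hatf_j=-\delta_{ij}L_F$, the same sign bookkeeping showing $(\hate_i\wedge f_j)\wedge(e_i\wedge\hatf_j)=L_E\wedge L_F$ with all other products vanishing, and the same identification of the double sum with $\tr(M_EM_F^t)$. The one place you go beyond the paper is the $G_{3,3}$-invariance of $\sigma$: the paper's proof does not address it (it is implicit in the dual-basis statement and in the computer-verified Theorem \ref{tInvariants}), whereas your observation that $\sum_k u_k\otimes\hatu_k$ is the canonical element corresponding to $\mathrm{id}$ under the equivariant identification of the two summands as dual $G_{3,3}$-modules is a clean, computation-free justification of that claim.
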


\begin{proof}
Note that $\hate_i \wedge e_i = L_E$ and $f_i \wedge \hatf_i = -L_F$ for $i=1,2,3$ and $\hate_i \wedge e_j = 0, f_i \wedge \hatf_j = 0$ for $i \not=j$. With this we have
\[
     (\hate_i\wedge f_j) \wedge (e_i\wedge \hatf_j) 
      = -\hate_i\wedge e_i \wedge f_j \wedge \hatf_j 
      = L_E \wedge L_F
\]
and all other wedge products vanish. Hence $(M_E,L_E)$ and $(M_F,L_F)$ are dual bases as claimed. We compute
\[
    \tr(M_E M_F^t) = \sum_{i=1}^3 \sum_{j=1}^3(\hate_i\wedge f_j) \cdot (e_i\wedge \hatf_j)
\]
and the formula for $\sigma$ follows. 
\end{proof}

\begin{definition}\label{dBigCubics}
We denote by $\widetilde{X}_E,\widetilde{X}_F \subset \P(\Lambda^3 V_6)$ the cubic
hypersurfaces defined by
\[
    2\det M_E - \sigma L_E = 0 
    \quadand
    2\det M_F  + \sigma  L_F = 0.
\]
\end{definition}

\begin{proposition}\label{pProjectionLagrangian}
    Let $A \subset \Lambda^3 V_6$ be a $\rho$-Lagrangian. Then $\widetilde{X}_E \cap A$ is a cone with vertex $\P(A_F)$ and $\widetilde{X}_F\cap A$ is a cone with vertex $\P(A_E)$. Let 
    \[
        X_E \subset \P(A_E\oplus \C \oplus \C) 
        \quadand
    X_F \subset \P(A_F\oplus \C \oplus \C)
    \]
    denote the projections from the respective vertices. Let $\iota \colon \C^2 \to \C^2$
    be the involution given by the matrix 
    $\left(\begin{smallmatrix} 0 & 1 \\ 1 & 0 \end{smallmatrix}\right).$
    Then $\iota(X_E)$ and $X_F$ are Gale dual cubics in the sense of Definition \ref{dGaleDualCubics}. 
\end{proposition}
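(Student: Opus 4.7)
The plan is to fix an adapted basis of $A$ compatible with the decomposition $A = A_E \oplus A_F \oplus \mathbb{C}^2$, normalised as in Proposition \ref{pRhoLagrangian}, and then compute the restrictions of $F_E := 2\det M_E - \sigma L_E$ and $F_F := 2\det M_F + \sigma L_F$ to $A$ in the corresponding linear coordinates $y_1,\dots,y_{10}$.

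The first key observation, coming from Proposition \ref{pDualBases}, is that $M_E$ and $L_E$ constitute the coordinate linear forms on the summand $U_E \subset \Lambda^3 V_6$ dual to $\hat{e}_i \wedge f_j$ and $L_E$ respectively, and vanish on $U_F$; symmetrically for $M_F, L_F$. Since by Proposition \ref{pRhoLagrangian} the basis vectors of $A_F$ (with coordinates $y_1,\dots,y_4$) have vanishing $U_E$-component, and those of $A_E$ vanishing $U_F$-component, one deduces that $M_E|_A$ and $L_E|_A$ depend only on $y_5,\dots,y_{10}$, while $M_F|_A$ and $L_F|_A$ depend only on $y_1,\dots,y_4,y_9,y_{10}$. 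The second observation concerns the $\sigma$-quadric: viewing $\sigma$ as the natural cross pairing between $U_E$ and $U_F$ given by the wedge product, one gets $\sigma|_A = y^t(Q^t P)y$, which by the explicit form of $Q^tP$ in Proposition \ref{pRhoLagrangian} equals $-2y_9 y_{10}$. Combining,
\[
F_E|_A \;=\; 2\det M_E|_A \;+\; 2 y_9 y_{10}\, L_E|_A
\]
involves only $y_5,\dots,y_{10}$, so $\widetilde X_E \cap A$ is a cone with vertex $\mathbb{P}(A_F)$; the mirror computation gives the claim for $\widetilde X_F$.

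For the last assertion, dividing $F_E|_A$ by $2$ expresses $X_E \subset \mathbb{P}(A_E \oplus \mathbb{C}^2)$ as $\det M + L_1 L_2 L_3 = 0$, where $M = M_E|_A$ is a $3\times 3$ matrix of linear forms in six variables, $L_1 = L_E|_A$, and $L_2, L_3$ are the coordinate forms $y_9,y_{10}$; the analogous equation for $X_F$ carries a minus sign, inherited from Definition \ref{dBigCubics}. Collecting the coefficients of $(M,L_1,L_2,L_3)$ and of $(M',L_1',L_2',L_3')$ into two $12\times 6$ matrices reproduces exactly the matrices $\widehat Q$ and $\widehat P$ associated to $A$ in Section \ref{sLagrangian}; the Lagrangian condition $Q^t P = P^t Q$ then translates, via this dictionary and after applying $\iota$ to normalise the two mixed coordinates, into the Gale duality relation $\widehat Q^t \widehat P = 0$ of Definition \ref{dGaleDualCubics}. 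The main obstacle is the careful bookkeeping of sign and ordering conventions between the wedge pairing on $\Lambda^3 V_6$, the block form of $(Q,P)$, the signs in Definition \ref{dBigCubics}, and the involution $\iota$; once pinned down, everything reduces to routine linear algebra.
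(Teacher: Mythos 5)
Your proposal is correct and follows essentially the same route as the paper: restrict the $G_{3,3}$-invariant forms to $A$ via the adapted basis of Proposition \ref{pRhoLagrangian}, read off the cone structure from the vanishing blocks $Q_1=P_2=0$, use $\sigma|_A=-2y_9y_{10}$ to put the projected equations in the form $\det M \pm L_1L_2L_3=0$, and identify the resulting coefficient matrices (after the $\iota$-swap of the two mixed coordinates) with $\widehat Q$ and $\widehat P$ satisfying $\widehat Q^t\widehat P=0$.
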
    

\newcommand{\toAdual}{\pi}
    
\begin{proof} 
Let
\[
    \toAdual \colon \C[\Lambda^3 V_6] \to \C[A]
\]
be the ring homomorphism induced by the inclusion $
A \subset \Lambda^3 V_6$.

Consider the basis $(M_E,L_E,M_F,L_F)$ of $\Lambda^3 V_6^*$. Since $A$ is a $\rho-$Lagrangian, there exists a basis $a_0,\dots,a_9$ of $A^*$ such that 
$\toAdual \colon \Lambda^3 V_6^* \to A^*$
is represented by a $10 \times 20$ matrix
\[
    \begin{pmatrix}
        0 & Q_2 & Q_3 & Q_4 \\
        P_1 & 0 & P_3 & P_4 
    \end{pmatrix}^t  = (Q,P)
\]
with $Q^tP - P^tQ = 0$ since $M_E,L_E$ and $M_F,L_F$ are dual bases with respect to the wedge product.
The zeros in this matrix mean that
\[
    \pi(M_E,L_E) \subset \langle a_4,\dots,a_9 \rangle
    \quadand
     \pi(M_F,L_F) \subset \langle a_0,\dots,a_3,a_8,a_9 \rangle.
\]
The coefficient matrix of $\pi(M_E,L_E)$ is $(Q_2,Q_3,Q_4)$ and the coefficient matrix of $\pi(M_F,L_F) = (P_1,P_3,P_4)$.

As computed in the last section, we can also assume
\[
Q^tP = 
\begin{pmatrix}
0 & 0 & 0 & 0\\
0 & 0 & 0 & 0\\
0 & 0 &  0 & -1\\
0 & 0 & -1 & 0 
\end{pmatrix}
\]
which means
\[
    \toAdual(\sigma) = -a_8a_9-a_9a_8 = -2a_8a_9
\]
in our current setting.

The equations of $X_E$ and $X_F$ are  
\[
    \toAdual(\det M_E) + \toAdual(L_E)a_8a_9 = 0
    \quadand
    \toAdual(\det M_F) - \toAdual(L_F)a_8a_9 = 0
\]
and therefore the $12 \times 6$ coefficient matrix of $\iota(X_E)$
is
\[
\widehat{Q} = \begin{pmatrix}
	Q_2 & Q_4& Q_3 \\
	0& 1& 0 \\
	0 & 0 & 1
\end{pmatrix}
\]
and the $12 \times 6$ coefficient matrix of $X_F$ is
\[
\widehat{P} = \begin{pmatrix}
	P_1 & P_3 & P_4 \\
	0& 1& 0 \\
	0 & 0 & 1
\end{pmatrix}.
\]
By the computations of the previous section we have $\widehat{Q}^t\widehat{P} = 0$ and therefore $\iota(X_E)$ and $X_F$ are Gale dual cubics.
\end{proof}

%%%%%%%%%
\section{Fano varieties of lines and EPW sextics}\label{sFanoLines}
%%%%%%%%%

\begin{definition}\label{dEPW}
For a Lagrangian $A\subset \Lambda^3 (E\oplus F)$, let
$$Y_{A^\perp}:=\{V_5\in \bP(V_6^*) \mid \dim(A\cap \wedge^3 V_5)\geq 1\}\subset \bP(V_6^*)$$ 
be the associated EPW-sextic. 
\end{definition}

In this section we show that the Fano variety of lines $F(X)$ of a sufficiently general non-syzygetic cubic $X \subset \P^5$ with an equation
\[
	\det M + L_1L_2L_3 =0 
\]
is birational to the natural O'Grady double cover of the EPW-sextics $Y_{A_i^\perp} \subset \P(E \oplus F)$ defined by the Lagrangians 
$A_i \subset \Lambda^3 (E\oplus F)$ constructed from this equation in the previous section. In particular, this shows that the Fano varieties of lines of $X$ and $X'$ are birational because the Lagrangians associated to both of them coincide.

\begin{definition}\label{dP2}
Let $X \subset \P^5$ be a cubic $4$-fold with equation
\[
	\det M + L_1L_2L_3 =0
\]
and $(e,f) \in E^* \oplus F^*$. If $e \not=0$ 
we denote by  $\Pi_i(e,f) \subset \P^5$ the linear subspace given by $$Mf+e L_i = 0$$
and by  $\Gamma_i (f) \subset \Pi_i(e,f)$ the linear subspace  given by $Mf = 0$ and $L_i=0$. Notice that $\Gamma_i(f) \subset X$.
\end{definition}

Notice that generically $\Pi_i(e,f)$ is a plane and    $\Gamma_i(f)$ is a line. 

\begin{proposition}\label{pAllPlanes}
 Let $\Pi \subset \P^5$ be a plane containing a line $\Gamma_i(f)$. Assume that $L_i$ does not vanish on $\Pi$. Then 
there exists a unique $e$ such that $\Pi = \Pi_i(e,f)$. 
\end{proposition}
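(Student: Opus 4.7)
The plan is to translate the geometric containment $\Gamma_i(f) \subset \Pi$ into an inclusion of defining linear subspaces of $V^*$ and then apply elementary linear algebra on the quotient by $\langle L_i\rangle$.

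First I would observe that since $\Gamma_i(f)$ is a line in $\P^5$, the four linear forms $(Mf)_1, (Mf)_2, (Mf)_3, L_i$ are automatically linearly independent, and together they span a $4$-dimensional subspace $U \subset V^*$ whose common vanishing locus is exactly $\Gamma_i(f)$. The plane $\Pi \subset \P^5$ is cut out by the vanishing of a $3$-dimensional subspace $W \subset V^*$, and the containment $\Gamma_i(f) \subset \Pi$ translates precisely to $W \subset U$.

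Next I would exploit the hypothesis that $L_i$ does not vanish on $\Pi$: this says exactly that $L_i \notin W$, so $W \cap \langle L_i \rangle = 0$, and a dimension count gives the internal direct sum decomposition $U = W \oplus \langle L_i \rangle$. Consequently, for each $j = 1,2,3$ there exists a unique scalar $e_j \in \C$ with $(Mf)_j + e_j L_i \in W$. Setting $e = (e_1, e_2, e_3)^t \in E^*$, the three vectors $(Mf)_j + e_j L_i$ project to the linearly independent classes of $(Mf)_j$ in $U/\langle L_i\rangle$, hence form a basis of $W$. Therefore $\Pi$ is cut out by the three equations encoded in $Mf + eL_i = 0$, i.e.\ $\Pi = \Pi_i(e,f)$, and the uniqueness of $e$ is forced by the uniqueness of the decomposition $(Mf)_j = w_j - e_j L_i$ with $w_j \in W$.

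I do not expect any serious obstacle here, since the argument is purely linear-algebraic. The only condition worth flagging explicitly is the linear independence of $(Mf)_1, (Mf)_2, (Mf)_3, L_i$, which is automatic from the assumption that $\Gamma_i(f)$ has codimension four in $\P^5$; everything else is a direct sum decomposition.
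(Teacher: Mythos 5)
Your argument is correct and is essentially the paper's proof: the paper likewise writes the ideal of $\Pi$ as the span of $(Mf)_j + e_j L_i$ for $j=1,2,3$, using exactly the hypothesis that $L_i$ does not vanish on $\Pi$ to split off $\langle L_i\rangle$ inside the ideal of the line. You have simply made explicit the direct-sum decomposition and the uniqueness of the scalars $e_j$, which the paper leaves implicit.
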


\begin{proof}
Let $Mf= (m_1, m_2, m_3)^t$ be a generalized column of $M$. The ideal of the line is $(m_1,m_2, m_3, L_i)$. The ideal of the plane $\Pi$ is a three-dimensional subspace, which we can write as 
\[
(m_1 + e_1L_i, m_2 + e_2L_i, m_3 + e_3L_i)
\]
with $e =(e_1,e_2, e_3) \in E$ since $L_i$ does not vanish on $\Pi$. 
\end{proof}

\begin{definition}\label{dConic}
Given $(e,f) \in E^* \oplus F^*$, $e \not=0$ such that $\Pi_i(e,f)$ is a plane and    $\Gamma_i(f)$ is a line, we denote by $C_i(e,f)$ the conic residual to the line $\Gamma_i (f)$ in $X \cap \Pi_i(e,f)$.
\end{definition}

\begin{proposition}\label{pSingular}
$C_i(e,f)$ is singular if  $(e,f)$ is a point on the EPW-sextic $Y_{A_i^\perp}$. 
\end{proposition}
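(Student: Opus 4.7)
The plan is to derive an explicit equation for $C_i(e,f)$ as a quadric on $\Pi_i(e,f)$ and match its singularity with the defining sextic of $Y_{A_i^\perp}$.

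First I would derive the equation. On $\Pi_i(e,f)$ we have $Mf + eL_i = 0$, so multiplying on the left by $\mathrm{adj}(M)$ and using $\mathrm{adj}(M)\cdot M = \det(M)\cdot I$ yields the identity $\det(M)\,f = -L_i\cdot \mathrm{adj}(M)\,e$ on $\Pi_i(e,f)$. Contracting with $e^t$ (and assuming the generic condition $e^t f\neq 0$, to be removed by continuity at the end) gives
\[
\det M\bigr|_{\Pi_i(e,f)} \;=\; -\frac{L_i\cdot (e^t\,\mathrm{adj}(M)\,e)}{e^t f}.
\]
Substituting into $\det M + L_1L_2L_3 = 0$ and factoring out the $L_i$ that cuts out $\Gamma_i(f)$ inside $\Pi_i(e,f)$ shows that $C_i(e,f)$ is cut out on $\Pi_i(e,f)$ by the quadric
\[
Q_{e,f} \;:=\; (e^t f)\,L_jL_k - e^t\,\mathrm{adj}(M)\,e,
\]
where $\{j,k\} = \{1,2,3\} \setminus \{i\}$. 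Completing the three defining linear forms of $\Pi_i(e,f)$ to a basis of $V^*$ represents $Q_{e,f}\bigr|_{\Pi_i(e,f)}$ by a $3\times 3$ symmetric matrix $S(e,f)$, so that $C_i(e,f)$ is singular iff $\det S(e,f) = 0$; a degree count shows this condition descends to a sextic on $\P(V_6^*) = \P(E^*\oplus F^*)$.

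The heart of the proof, and the main obstacle, is identifying $\det S(e,f)$ with the defining sextic of $Y_{A_i^\perp}$. I would interpret the EPW condition $\dim(A_1 \cap \Lambda^3 V_5) \geq 1$ for $V_5 = \ker(e,f)$ as the non-injectivity of the contraction $\iota_{(e,f)}\colon A_1 \to \Lambda^2 V_5$, a $10\times 10$ linear map. In the basis of $A_1$ provided by the blocks $(Q_1,\ldots,Q_4;P_1,\ldots,P_4)$ of Section~\ref{sLagrangian}, together with the Lagrangian relation $\widehat{Q}^t\widehat{P}=0$, elementary row and column operations involving the columns of $M$ and the forms $L_j,L_k$ should reduce $\det\iota_{(e,f)}\bigr|_{A_1}$ to a nonzero scalar multiple of $\det S(e,f)$, completing the argument.

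A more conceptual alternative, sufficient for the one implication stated in the proposition, uses the projection description of $X$ from $\widetilde{X}_E$ in Proposition~\ref{pProjectionLagrangian}: given a nonzero $\alpha\in A_1 \cap \Lambda^3 V_5$, one produces from $\alpha$ a pair of lines on $X$ contained in $\Pi_i(e,f)$ and meeting at a point; by a degree count their union coincides with the residual conic $C_i(e,f)$, which is therefore singular.
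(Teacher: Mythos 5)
There is a genuine gap, located exactly where you flag ``the heart of the proof''. Your derivation of the residual conic's equation $Q_{e,f}=(e^tf)\,L_jL_k - e^t\,\mathrm{adj}(M)\,e$ on $\Pi_i(e,f)$ is a reasonable formal manipulation (modulo the degenerate locus $e^tf=0$, which you defer to continuity), but the proposition \emph{is} the assertion that the vanishing of the $10\times 10$ determinant $\det\bigl(\iota_{(e,f)}|_{A_1}\bigr)$ cutting out $Y_{A_1^\perp}$ forces the vanishing of $\det S(e,f)$, and you do not establish this: ``elementary row and column operations \dots should reduce'' is not an argument, and it is far from clear how to perform such a reduction, since the $10\times 10$ matrix has entries depending on $(e,f)$ through the gluing blocks $(Q_j,P_j)$ in a way that is not visibly related to the coefficients of $Q_{e,f}$ in coordinates on the moving plane $\Pi_i(e,f)$. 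Your ``more conceptual alternative'' has the same defect: you assert that $\alpha\in A_1\cap\Lambda^3V_5$ ``produces a pair of lines'' whose union is $C_i(e,f)$, but give no mechanism; the only evident way to obtain the two lines is to first know that the conic is singular at the point determined by $\alpha$, which is what is to be proved.

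The paper's proof avoids discriminants entirely and instead exhibits an explicit singular point. Everything is lifted to $\P(\Lambda^3 V_6)$: the conic $C_i(e,f)$ is the intersection of the cone $\widetilde{C}_i(e,f)$ with $\P(A_i)$, followed by projection from $\P(A_F)$. Given $0\neq x\in A_i\cap\Lambda^3 V_5$, one checks (by a short computation) that $\P(\Lambda^3 V_5)\subset\widetilde{C}_i(e,f)$, so $x$ lies on the lifted conic, and that the tangent hyperplane to $\widetilde{C}_i(e,f)$ at $x$ is $\widetilde{H}_x=\{y\mid x\wedge y=0\}$. Since $A_i$ is Lagrangian and $x\in A_i$, this hyperplane contains all of $\P(A_i)$, so $C_i(e,f)=\widetilde{C}_i(e,f)\cap\P(A_i)$ is singular at $x$. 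This pointwise use of the Lagrangian condition is the idea your proposal is missing; note also that it proves exactly the stated one-way implication, whereas completing your determinantal identification would amount to proving the stronger ``if and only if'' statement.
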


\begin{proof}
As in the proof of Proposition \ref{pProjectionLagrangian} we consider the
ring homomorphism
\[
    \toAdual \colon \C[\Lambda^3 V_6] \to \C[A_i]
\]
induced by the inclusion $A_i \subset \Lambda^3 V_6$. We define the following varieties in $\P(\Lambda^3 V_6)$:
\begin{enumerate}[label={(\arabic*)}]
    \item $\widetilde{\Pi}_i(e,f) =  \{ M_E f + eL_E = 0 \}$
    \item $\widetilde{\Gamma}_i(f) =  \{ M_E f  = L_E = 0 \}$ 
    \item $\widetilde{C}_i(f)$ the variety residual to $\widetilde{\Gamma}_i(f)$ in $\widetilde{X}_E \cap \widetilde{\Pi}_i(f)$
\end{enumerate}
Note that in all three cases the intersection with $\P(A_i)$ is a cone with vertex containing $\P(A_F)$ and the projections from $\P(A_F)$ to $\P(A_E \oplus \C \oplus \C)$
are $\Pi_i(e,f), \Gamma_i(f)$ and $C_i(e,f)$ as defined above. 

If $e$ and $f$ are nonzero, we can assume $e=e_1$ and $f=f_1$ using the notation from the previous section. Let $V_5 \subset V_6$ be the hyperplane defined by $e+f \in V_6^*$. 

If $(e,f)$ is a point on the EPW-sextic, then $\Lambda^3 V_5 \cap A_i$
is not empty. Let $x \not= 0$ be a point in the intersection. 
One can compute \cite[Sections5-8.m2]{MacaulayFiles} that $\P(\Lambda^3 V_5) \subset \widetilde{C}_i(e,f)$. This means in particular that the projection of $x$ to $\P(A_E \oplus \C \oplus \C)$ lies in $C_i(e,f)$. 

Now let $\widetilde{H}_x= \{ y \in \Lambda^3 V_6 | x \wedge y =0 \}$. One can compute that $\widetilde{H}_x$ is tangent to $\widetilde{C}_i(e,f)$ in $x$. Since $A_i$ is Lagrangian and $x\in A_i$, we have $\P(A_i) \subset H_x$ and therefore $C_i (e,f)=  \widetilde{C}_i(e,f) \cap \P(A_i)$ is singular in $x$.
\end{proof}

\begin{corollary}\label{cBirFanos}
Assume that there exists a point  $(e,f)$ on the EPW-sextic $Y_{A_i^\perp}$ such that $\Pi_i(e,f)$ is a plane and    $\Gamma_i(f)$ is a line.
The construction in Proposition \ref{pSingular} induces a birational map 
\[
\widetilde{Y_{A_i^\perp}} \dashrightarrow F(X)
\]
where $\widetilde{Y_{A_i^\perp}}$ is the O'Grady double cover of $Y_{A_i^\perp}$. Replacing $X$ by $X'$ we get that $F(X)$ and $F(X')$ are birational. 
\end{corollary}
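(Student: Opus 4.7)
The plan is to promote the construction described in Proposition \ref{pSingular} to an explicit rational map $\psi \colon \widetilde{Y_{A_i^\perp}} \dashrightarrow F(X)$, and then verify birationality by combining a dimension count with a uniqueness argument for recovering $(e,f)$ from a line in $X$. The last sentence of the corollary will then follow instantly from Remark \ref{rSameLagrangians}.

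To define $\psi$: for very general $(e,f) \in Y_{A_i^\perp}$, Proposition \ref{pSingular} shows that the residual conic $C_i(e,f) \subset X$ is singular at the projection of a generator of the one-dimensional intersection $A_i \cap \Lambda^3 V_5$, where $V_5$ is the hyperplane annihilated by $e+f$. For generic such $(e,f)$ the conic $C_i(e,f)$ degenerates to a union of two distinct lines $\ell_1 \cup \ell_2 \subset X$, and the O'Grady double cover $\widetilde{Y_{A_i^\perp}} \to Y_{A_i^\perp}$ is \'etale of degree two above the smooth locus of $Y_{A_i^\perp}$. A point of $\widetilde{Y_{A_i^\perp}}$ above such $(e,f)$ is then naturally identified with a choice of one of the two components $\ell_k$, and I set $\psi((e,f), \ell_k) = \ell_k \in F(X)$.

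To prove $\psi$ is birational I would verify it is dominant and generically injective. Both $\widetilde{Y_{A_i^\perp}}$ and $F(X)$ are smooth irreducible projective hyperk\"ahler fourfolds of $K3^{[2]}$-type, by the theorems of O'Grady and Beauville--Donagi respectively; in particular they have equal dimension four, so dominance reduces to non-constancy of $\psi$, which can be checked at a single smooth point where the construction is defined (with the hypothesis of the corollary guaranteeing existence of such a point). For generic injectivity, given $\ell$ in the image one recovers $f \in \bP(F^*)$ from the line $\Gamma_i(f)$ residual to $\ell$ in a plane $\Pi \supset \ell$ meeting $\{L_i=0\}$ in a line of $X$; Proposition \ref{pAllPlanes} then determines $e$ uniquely from $\Pi$ and $f$. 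The two possible lifts to $\widetilde{Y_{A_i^\perp}}$ over $(e,f)$ correspond to the two components of $C_i(e,f)$, of which $\ell$ itself distinguishes one.

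The main obstacle is precisely this last uniqueness: a priori the line $\ell$ could lie in several distinct planes of the form $\Pi_i(e,f)$ as $f$ varies in $\bP(F^*)$. Ruling this out requires either a direct coordinate computation analogous to those recorded in \cite[Sections5-8.m2]{MacaulayFiles}, or an infinitesimal argument showing that the differential of $\psi$ is an isomorphism at a generic point, forcing $\deg \psi = 1$. Once birationality of $\psi$ is established, the final assertion is immediate: by Remark \ref{rSameLagrangians} the $\rho$-Lagrangian $A_i$ attached to the Gale dual $X'$ coincides with the one attached to $X$, so the same recipe produces a birational map $\widetilde{Y_{A_i^\perp}} \dashrightarrow F(X')$, and composition with $\psi^{-1}$ yields a birational map $F(X) \dashrightarrow F(X')$.
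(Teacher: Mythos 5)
Your strategy is the same as the paper's --- send a sheet of the O'Grady double cover over $(e,f)$ to a component of the singular conic $C_i(e,f)$, then argue generic injectivity and conclude by equality of dimensions --- but there are two places where the actual content is missing. The first is well-definedness. You assert that a point of $\widetilde{Y_{A_i^\perp}}$ over $(e,f)$ is ``naturally identified'' with a choice of component $\ell_k$ of $C_i(e,f)$. This is not automatic: the O'Grady double cover is defined intrinsically from the Lagrangian $A_i$, and its fibre over $(e,f)$ is a priori just an abstract two-element set; the family of line-pairs $\{(\,(e,f),\ell_k\,)\}$ is a \emph{different} \'etale double cover, and one must show the two coincide. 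Concretely, what is needed is that the covering involution of $\widetilde{Y_{A_i^\perp}}$ agrees with the involution swapping the two lines in $C_i(e,f)$; the paper imports this from \cite[Remark 3.3 and comments after Prop.~3.7]{BFMQ24}. Without it you only obtain a degree-two correspondence between $Y_{A_i^\perp}$ and $F(X)$, not a rational map out of $\widetilde{Y_{A_i^\perp}}$.

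The second gap is the uniqueness step you yourself flag as ``the main obstacle'' and defer to a computation or an infinitesimal argument. It has a short direct resolution, which is what the paper does: a general line $\ell$ in the image is not contained in $\{L_i=0\}$ and meets it in a single point $p$, which lies on $X\cap\{L_i=0\}=\{\det M=0\}\cap\{L_i=0\}$, so the matrix $M(p)$ drops rank; since $\Pi_i(e,f)\cap\{L_i=0\}=\Gamma_i(f)$ one has $p\in\Gamma_i(f)$, i.e.\ $M(p)f=0$, so $f$ is recovered as the (generically one-dimensional) kernel of $M(p)$ and is determined by $\ell$ alone. The plane $\Pi$ is then the span of $\Gamma_i(f)$ and $\ell$, and Proposition \ref{pAllPlanes} produces the unique $e$ with $\Pi=\Pi_i(e,f)$. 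This rules out $\ell$ lying in several planes $\Pi_i(e,f)$ without any differential or computer computation. Your final reduction of the birationality of $F(X)$ and $F(X')$ to Remark \ref{rSameLagrangians} is correct and matches the paper.
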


\begin{proof}
By  \cite[Remark 3.3 and comments after Prop. 3.7]{BFMQ24} we know that the birational involution on $F(X)$ given by interchanging the two lines in the singular conic $C_i(e,f)$ coincides with the covering involution of the O'Grady double cover $\widetilde{Y_{A_i^\perp}}$. Thus the above construction gives a well-defined map $\widetilde{Y_{A_i^\perp}} \dashrightarrow F(X)$. To see that this is generically one-to-one onto its image, consider a general line $\ell$ in the image. This intersects $L_i=0$ in a point, and the matrix $M$ drops rank. Let $f$ span the kernel of $M$. Let $\Pi$ be the span of $\Gamma_i(f)$ and $\ell$. By Proposition \ref{pAllPlanes} there exists a unique $e$ such that $\Pi = \Pi_i(e,f)$. Since $F(X)$ and $Y_{A_i^\perp}$ are four-dimensional, $\widetilde{Y_{A_i^\perp}} \dashrightarrow F(X)$ is birational.
\end{proof}

\section{The connection with Gushel-Mukai fourfolds}\label{sGM}
The purpose of this section is to establish a link between the Gale dual cubics of Section \ref{sLagrangian} and Gushel Mukai fourfolds. This will allow us to show that for a very  general cubic fourfold $X\in \calC_{nonsyz},$ the Gale dual cubic $X'$ is the only non-trivial Fourier-Mukai partner of $X$. We will also show that $X, X'$ are birational and non-isomorphic, and their Fano varieties of lines are birational. In particular, $X'$ is the cubic fourfold of \cite[Theorem 3.1]{Brooke:2024aa}.

We start by recalling a few facts about the geometry of Gushel-Mukai fourfolds needed in the sequel, following mainly \cite{KuzPerry} and \cite{DK1}. Let $V_5$ be a vector space of dimension $5$.

\begin{definition}
    An (ordinary) Gushel-Mukai fourfold (GM fourfold) $Z$ is a smooth transverse intersection of the form $$Z:=Gr(2, V_5)\cap \bP(W)\cap Q\subset \bP(\wedge^2V_5),$$ where $W\subset \wedge^2 V_5$ is a codimension 1 subspace, $Q$ is a quadric.
\end{definition}
By \cite[Thm. 3.10]{DK1}, a smooth GM fourfold $Z$ as above is determined uniquely by a \textit{Lagrangian data set} $(V_6,V_5, A)$ where $V_6$ is a 6-dimensional vector space, $V_5\subset V_6$ is a hyperplane, and $A\subset \wedge^3V_6$ is a Lagrangian subspace containing no decomposable vectors with $\dim (A\cap \Lambda^3 V_5)=1$.  Conversely, such a triple $(V_6, V_5, A)$ determines a smooth GM fourfold. For given $Z$, $V_5$ can be identified with the vector space of quadrics $H^0 (\bP(W), \mathcal{I}_M(2))$ that vanish on $M = Gr(2, V_5)\cap \bP(W)$, noting that $M$ is cut out by the traces of the Pl\"ucker quadrics
\[
q(v)(w, w):= v\wedge w\wedge w
\]
for $v\in V_5$, $w\in W$ (after choosing an isomorphism $\Lambda^5 V_5 \simeq \bC$). Then $V_6=V_6(X)$ is the space of all quadrics $H^0 (\bP(W), \mathcal{I}_Z(2))$ vanishing on $Z$. 

For any Lagrangian $A\subset \wedge^3 V_6$, recall that according to Definition \ref{dEPW} we get $Y_{A^\perp}$. By O'Grady's work, when $A$ contains no decomposable vectors, this is an EPW sextic. Let $\calM^{GM}$ be the moduli space of GM fourfolds, and let $\calM^{EPW}:= \mathrm{LGr}(\wedge^3 V_6)//\PGL(V_6)$ be the GIT moduli space for EPW sextics, constructed by O'Grady \cite{OG16}. We have a map
$$\pi:\calM^{GM}\twoheadrightarrow \calM^{EPW}$$ which associates to a GM fourfold the corresponding Lagrangian $[A]$. By \cite{DK3}, the fiber $\pi^{-1}([A])$ of a general Lagrangian with no decomposable vectors is isomorphic to $Y_{A^\perp}$ (modulo the finite group of linear automorphisms of $V_6$ preserving $A^\perp$).

If $V_3 \subset V_5$ is a three-dimensional subspace such that $\Pi:=\mathrm{Gr}(2, V_3) \subset Z$, then we call $\Pi$ a $\rho$-plane of the Gushel-Mukai fourfold $Z$.

\begin{theorem}\label{tConnectionToGMs}
Let $(A, E, F)$ be a $\rho$-Lagrangian data set such that $A$ contains no decomposable vectors.
\begin{enumerate}
\item
The EPW sextic $Y_{A^{\perp}}$ contains two disjoint rational surfaces $\Sigma$, $\Sigma' $ isomorphic to $\bP^2$ whose points correspond to Gushel-Mukai fourfolds containing $\rho-$planes. Conversely, a general EPW-sextic of this type arises from a $\rho$-Lagrangian data set in this way. Every Gushel-Mukai fourfold in $\Sigma$ contains a distinguished $\rho-$plane $\mathrm{Gr}(2, E)$ and every Gushel-Mukai in $\Sigma'$ contains a distinguished $\rho-$plane  $\mathrm{Gr}(2, F)$.

\item 
Given an EPW sextic $Y_{A^{\perp}}$ as in part a), let $Z$ and $Z'$ be any Gushel-Mukai fourfolds corresponding to points in $\Sigma, \Sigma'$. Projecting $Z$ from $\mathrm{Gr}(2, F)$ gives $X$ and projecting $Z'$ from $\mathrm{Gr}(2, E)$ gives $X'$, where $X$ and $X'$ are the Gale dual cubic fourfolds constructed in Section \ref{sLagrangian}. 
\item 
$X$ and $X'$ are Fourier-Mukai partners and birational. For general choices, the Fano varieties of lines of $X$ and $X'$ are birational, but $X$ and $X'$ are not isomorphic. 
\end{enumerate}
\end{theorem}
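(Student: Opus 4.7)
The plan is to treat each of the three parts separately, with the bulk of the effort going into parts (a) and (b).

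For part (a), the surface $\Sigma$ will be taken to be the locus of $V_5 \subset V_6 = E \oplus F$ containing $E$, naturally parametrised by $\P(V_6/E)^* \cong \P(F^*) \cong \P^2$. To see that $\Sigma \subset Y_{A^\perp}$, I would write $V_5 = E \oplus V_2'$ with $V_2' \subset F$ a $2$-plane and compute
\[
\Lambda^3 V_5 \cap U_E = \Lambda^3 E \oplus (\Lambda^2 E \otimes V_2'),
\]
which has dimension $7$ inside $U_E$ of dimension $10$. Since $\dim A_E = 4$ by the $\rho$-Lagrangian hypothesis, $A \cap \Lambda^3 V_5 \supset A_E \cap (\Lambda^3 V_5 \cap U_E)$ is nontrivial by the standard dimension count, so $[V_5]\in Y_{A^\perp}$. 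The surface $\Sigma'$ is defined symmetrically from $F \subset V_5$, and disjointness is immediate since no proper subspace of $V_6$ can contain $E + F = V_6$. That the GM fourfold associated to $(V_6, V_5, A)$ with $V_5 \supset E$ contains the $\rho$-plane $\mathrm{Gr}(2, E) = \P(\Lambda^2 E)$ will follow from a direct computation in the Debarre-Kuznetsov setup \cite{DK1}: the hyperplane $W \subset \Lambda^2 V_5$ and the Pl\"ucker quadrics $q(v)$, $v \in V_5$, cut out the GM fourfold in $\P(\Lambda^2 V_5)$, and one checks that both vanish on $\Lambda^2 E$ using the splitting $V_6 = E \oplus F$. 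For the converse, an EPW sextic containing two disjoint rational $\P^2$'s of this geometric type forces a splitting $V_6 = E \oplus F$ (with $E, F$ recovered from the two distinguished $\rho$-planes), and the dimension count above runs in reverse to give $\dim A_E = \dim A_F = 4$.

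For part (b), I would leverage Proposition \ref{pProjectionLagrangian}, which already constructs the Gale dual cubics intrinsically from $A$ as the projections of $\widetilde{X}_E \cap \P(A)$ and $\widetilde{X}_F \cap \P(A)$ from the vertices $\P(A_F)$ and $\P(A_E)$. The claim to verify is that for any $Z \in \Sigma$, the birational projection of $Z$ from its $\rho$-plane $\mathrm{Gr}(2, E)$ has image equal to $X_E$, up to the involution $\iota$. This amounts to matching equations: I would write the Debarre-Kuznetsov data for $Z$ (the hyperplane $W$ and the quadrics $q(v)$) in coordinates compatible with $V_6 = E \oplus F$, perform the linear projection from $\P(\Lambda^2 E) \subset \P(W)$, and check that the resulting cubic in a $\P^5$ has equation $\det(M) + L_1L_2L_3 = 0$ with $M$ and $L_i$ extracted from $A$ exactly as in Section \ref{sLagrangian}. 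The invariance of the output under the choice of $Z \in \Sigma$ is visible from the intrinsic nature of the construction in Proposition \ref{pProjectionLagrangian}.

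For part (c), the first two claims follow by combining (b) with standard results. Since $Z$ and $Z'$ correspond to the same EPW sextic $Y_{A^\perp}$, they are period partners; by \cite{KuzPerryCatCones} their Kuznetsov components are equivalent, and by \cite{KuzPerry, DK1} they are birational (using that $A$ has no decomposable vectors). Projection from a $\rho$-plane preserves the Kuznetsov component by \cite[\S 5]{KuzPerry}, so $X$ is a Fourier-Mukai partner of $Z$, which is a Fourier-Mukai partner of $Z'$, which is a Fourier-Mukai partner of $X'$; analogous chains of birational maps give birationality of $X$ and $X'$. The birationality of Fano varieties of lines is immediate from Corollary \ref{cBirFanos}, since both $F(X)$ and $F(X')$ are birational to the same O'Grady double cover $\widetilde{Y_{A^\perp}}$. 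Non-isomorphism of $X$ and $X'$ for general choices follows from Theorem \ref{tFMPartners}, which supplies a unique nontrivial Fourier-Mukai partner for very general non-syzygetic $X$; the Gale dual $X'$ realises this partner and so must be non-isomorphic to $X$. The main obstacle is the coordinate matching in part (b), which requires careful bookkeeping between the Debarre-Kuznetsov Lagrangian construction and the representation-theoretic normal forms of Section \ref{sRepTheory}.
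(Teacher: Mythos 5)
Your architecture tracks the paper's closely. For (a) the paper obtains both the inclusion $\Sigma=\bP(E^\perp)\subset Y_{A^\perp}$ and the containment of the $\rho$-plane in one step by quoting \cite[Remark 5.1]{KuzPerry} (the condition $\mathrm{Gr}(2,V_3)\subset Z$ is $\dim(A\cap(\Lambda^2V_3\wedge V_6))\ge 4$ together with $V_3\subset V_5$), which your dimension count re-derives in the case at hand; that is fine, though for the converse you also need that for a general $A$ the inequality is an equality, i.e.\ that $\dim A_E$ is not larger than $4$, which ``running the count in reverse'' does not by itself give. For (c), your treatment of the Fourier--Mukai partnership, the birationality of $X$ and $X'$, and the birationality of the Fano varieties is exactly the paper's. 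The real content of (b) is, as you say, the equation matching, but you defer it entirely: in the paper this is done by putting the cone $\widetilde{Z}$ over $Z$ into the invariant-theoretic normal form of Section \ref{sRepTheory} (the Pfaffians of $N_{1,5}$ together with the quadric $\sigma_{1,5}$) and checking by a Macaulay2 computation that the equation of $\widetilde{X}_E$ lies in the ideal of $\widetilde{Z}_{1,5}$; equality of the projection with the Gale dual cubic, and its independence of the choice of $Z$, then follow because the image is already known to be a cubic fourfold by \cite[Section 5]{KuzPerry} --- not from Proposition \ref{pProjectionLagrangian} alone. Be careful also about which plane you project from: the preimage in $A$ of $\Lambda^2E$ under $A\to W\hookrightarrow \Lambda^2V_5$ is $A_E$, so projecting from $\mathrm{Gr}(2,E)$ lands in $\bP(A_F\oplus\C^2)$ and yields the $F$-side cubic; your labels appear crossed relative to the linear algebra.

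The genuine gap is the non-isomorphism claim in (c). Theorem \ref{tFMPartners} says $|\mathrm{FM}(X)|=2$, but knowing that $X'$ is a Fourier--Mukai partner of $X$ does not determine which of the two isomorphism classes it represents: a priori $X'\cong X$ and the nontrivial partner is realised by some other cubic. ``The Gale dual realises this partner'' is precisely the assertion to be proved, so your argument is circular. The paper instead argues geometrically: a general non-syzygetic cubic has exactly three hyperplane sections that are six-nodal cubic symmetroids, namely $\{L_i=0\}\cap X$, so any projectivity $X\to X'$ must carry the three cubic surfaces $X\cap\{L_i=L_j=0\}$ onto the corresponding surfaces of $X'$; a computation of the Clebsch--Salmon invariants of these six cubic surfaces for an explicit pair $(X,X')$ shows no such matching exists, and since these are polynomial invariants depending continuously on the cubics, the conclusion holds for general $X$. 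You need an argument of this kind (or a direct lattice-theoretic identification of the period point of $X'$) to close part (c).
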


\begin{proof}
For part a) note that by \cite[Remark 5.1]{KuzPerry} the condition $\mathrm{Gr}(2, V_3) \subset Z$ is equivalent to the conditions
\begin{equation}\label{eKuzPerry}
\dim \Bigl( A \cap \bigl( (\Lambda^2 V_3)\wedge V_6 \bigr) \Bigr) \ge 4, \quad V_3 \subset V_5 .
\end{equation}
If $V_6 =E \oplus F$ with $\dim E =\dim F=3$ and $A \subset \Lambda^3 V_6$ is a $\rho-$Lagrangian, then 
\[
\dim \Bigl( A \cap \bigl( (\Lambda^2 E)\wedge V_6 \bigr) \Bigr) = 4, \: \dim \Bigl( A \cap \bigl( (\Lambda^2 F)\wedge V_6 \bigr) \Bigr) = 4
\]
by Definition \ref{dRhoLagrangian} the subspaces $V_5 \supset E$ define a $\bP^2$ inside the EPW sextic $Y_{A^{\perp}}$, which is $\Sigma:=\bP(E^{\perp}) \subset Y_{A^{\perp} }\subset \bP(V_6^*)$. Similarly, interchanging $E$ and $F$, we get $\Sigma':= \bP(F^{\perp}) \subset Y_{A^{\perp} }\subset \bP(V_6^*)$. Clearly, $\Sigma$ and $\Sigma'$ are disjoint since there is no $V_5$ containing both $E$ and $F$. 

Again by \cite[Remark 5.1]{KuzPerry}, having fixed $A$, the $V_3$'s satisfying the first part of equation \ref{eKuzPerry} form a finite set. Therefore, the points $[V_5]$ of $Y_{A^{\perp}}$ corresponding to Gushel-Mukai fourfolds containing $\rho$-planes form a dense open subset of finitely many rational surfaces in $Y_{A^{\perp}}$, each of which is isomorphic to $\bP(E_i^{\perp})$ for some uniquely determined three-dimensional subspace $E_i \subset V_6$. If two such surfaces are disjoint, the corresponding linear subspaces, $E$ and $F$ say, must span $V_6$: $V_6 =E\oplus F$ (since otherwise there is a $V_5\subset V_6$ containing both). If $Y_{A^{\perp}}$ is general, the first condition in equation \ref{eKuzPerry} will be an equality, and applying it to $E$ and $F$ we retrieve the definition of a $\rho-$Lagrangian $A$. 

\medskip

Next we prove b). Let $Y_{A^\perp}$ be as in part a), and $Z,Z'$ Gushel-Mukai fourfolds corresponding to points in $\Sigma, \Sigma'$ respectively. We write $V_5:=V_5(Z)$ and $V_5':=V_5(Z')$. 

Following \cite[Section 7.2]{DIM15} and \cite[Section 5]{KuzPerry}, the linear projection of $Z$ from $\Gr(2,F)$ (resp $Z'$ from $\Gr(2,E)$) gives birational maps to cubic fourfolds, which we want to show are nothing but the Gale dual fourfolds $X$ and $X'$ introduced above. We are now given Lagrangian construction data $(V_6, V_5, A)$ for $Z$ (and similarly for $Z'$). Recall that this data determines a map
\[
q=q_{\mathrm{GM}}\colon V_6 \to S^2 A^*
\]
which, putting $W = A /(A \cap \Lambda^3 V_5)$, is such that it factors over $S^2 W^*$. If $\lambda$ is a linear form in $V_6^*$ with kernel $V_5$, then, by the Leibniz rule, contraction with $\lambda$ induces maps
\[
\lambda_p \colon \Lambda^p V_6 \to \Lambda^{p-1} V_5
\]
with kernel $\Lambda^p V_5$. In particular, the composition
\[
\xymatrix{
A \ar@{^{(}->}[r] & \Lambda^3 V_6 \ar[r]^{\lambda_3} & \Lambda^2 V_5
}
\]
induces an injection
\[
\xymatrix{
W = A/(A \cap \Lambda^3 V_5) \ar@{^{(}->}[r] & \Lambda^2 V_5 .
}
\]
Moreover, if $v\in V_5$ and $w\in W$ then 
\[
q(v)(w,w)= v\wedge w \wedge w . 
\]
One then recovers $Z$ as 
\[
Z = \bigcap_{v \in V_6} Q(v)
\]
where $Q(v) \subset \bP(W)$ is the quadric defined by $q(v)$. Note that the preimage of $\Lambda^2 F$ under the composition
\[
\xymatrix{
A \ar@{->>}[r] & W = A/(A \cap \Lambda^3 V_5) \ar@{^{(}->}[r] & \Lambda^2 V_5
}
\]
is just  $A \cap \bigl( (\Lambda^2 F)\wedge V_6 \bigr) = A \cap \bigl( (\Lambda^2 F)\wedge (E \oplus F) \bigr) = A_F$. We denote by $\widetilde{Z}\subset \bP (A)$ the preimage of $Z$ in $\bP(A)$, in other words
\[
\widetilde{Z} = \bigcap_{v \in V_6} \widetilde{Q}(v)
\]
where $\widetilde{Q}(v) \subset \bP(A)$ is the quadric defined by $q(v)$ (now in $\bP(A)$). 
We obtain $Z$ by projecting $\widetilde{Z}$ from the point $[(A \cap \Lambda^3 V_5)]$. 
Thus projecting $Z$ from the $\rho-$plane $\mathrm{Gr}(2, F)$ is the same as projecting $\widetilde{Z}$ from the subspace corresponding to $A_F$, landing in $\bP(A/A_F) =\bP (A_E \oplus A_2)$. 
We need to check that as the image of the projection we obtain indeed the cubic $X_E$ constructed from the same $\rho-$Lagrangian data in Section \ref{sRepTheory}. We consider $V_6^*=V_1^*\oplus V_5^*$ and $G_{1,5}$ in the notation of that section. Let $(v_1, v_2, v_3, v_4, v_5):=(e_2, e_3, f_1, f_2, f_3)$ be a basis of $V_5^*$. Then
\[
N_{1,5} = (v_i \wedge v_j \wedge e_1 )
\]
is a skew-symmetric $5\times 5$ matrix with entries in $\Lambda^3 V_6^*$. Let 
\[
U_{1,5} = (u_1, \dots , u_{10}) = ((-1)^{i-j+1}e_1\wedge v_i \wedge v_j )_{i<j}
\]
and 
\[
\widehat{U}_{1,5} = (\hat{u}_1, \dots , \hat{u}_{10}) = (v_k\wedge v_l \wedge v_m)_{k<l<m}
\]
sorted such that $u_i \wedge \hat{u}_i = e_1\wedge v_1\wedge v_2\wedge \dots \wedge v_5$. Then
\[
\sigma_{1,5}= \sum_{i=1}^{10} u_i \cdot \hat{u}_i
\]
is a $G_{1,5}$-invariant quadric in $\mathbb{C}[\Lambda^3 V_6]$. Let $\widetilde{Z}_{1,5}$ be the variety defined by the $4\times 4$ Pfaffians of $N_{1,5}$ and $\sigma_{1,5}$. Using Macaulay2 \cite[Sections5-8.m2]{MacaulayFiles}, one can compute that the equation of $\widetilde{X}_E$ from Definition \ref{dBigCubics} is contained in the ideal of $\widetilde{Z}_{1,5}$. Since $\widetilde{Z} = \widetilde{Z}_{1,5} \cap \bP (A)$, we get that $X_E$ contains the image of the projection of $Z$ from the $\rho$-plane $\mathrm{Gr}(2, F)$. Since this image is a cubic fourfold by \cite[Section 5]{KuzPerry}, we get that it equals $X_E$. An analogous argument shows that projecting $Z'$ from $\mathrm{Gr}(2, E)$ gives $X_F$. Note that by Proposition \ref{pProjectionLagrangian}, $X=\iota(X_E)$ and $X'=X_F$. 
 
\medskip

We now prove c). The fact that $X$ and $X'$ are Fourier-Mukai partners follows combining two facts. Firstly, we use \cite[Thm. 1.6]{KuzPerryCatCones}, which shows that the GM categories (or Kuznetsov categories) $\mathcal{K}(Z)$ and $\mathcal{K}(Z')$ (defined in \cite[Prop. 2.3]{KuzPerry}) are equivalent if the Gushel-Mukai fourfolds are period partners. This applies to $Z$ and $Z'$ since the associated Lagrangian subspace $A$ is the same in both cases. Secondly, the Kuznetsov components $\mathcal{A}_X$ and $\mathcal{A}_{X'}$ are equivalent to $\mathcal{K}(Z)$ and $\mathcal{K}(Z')$, respectively, by \cite[Thm. 5.8]{KuzPerry}. 

Moreover, $Z$ and $Z'$ are birational because they are period partners, using \cite[Thm. 4.15]{DK1}, and hence $X$ and $X'$ are birational. Corollary \ref{cBirFanos} shows that the Fano varieties of lines $F(X)$ and $F(X')$ are birational if $X$ and $X'$ are general. It remains to show that for general choices, $X$ and $X'$ are not isomorphic. If $X$ and $X'$ are isomorphic, they are projectively equivalent. A general nonsyzygetic cubic $X$ has precisely three hyperplane sections that are cubic symmetroids, cubic threefolds with six nodes in linear general position and if 
\[
	\det M + L_1L_2L_3 =0 
\]
is an equation for $X$, then these are given by $\{L_i=0\} \cap X$, $i=1,2,3$. Hence, if $X'$ has equation
\[
	\det M' - L_1'L_2'L_3' =0 
\]
any projectivity mapping $X$ onto $X'$ will carry the hyperplane sections $L_i=0$ onto the hyperplane sections $L_j'=0$ (in some order).
Hence it will carry the three cubic surfaces $\Sigma_1, \Sigma_2, \Sigma_3$ given as $X \cap \{L_i =L_j=0\}$, $\{i, j\} \subset \{ 1,2,3\}$ a two element subset, onto the corresponding $\Sigma_1', \Sigma_2', \Sigma_3'$ obtained from $X'$ in the same way. One can check by a computer algebra calculation that for a special choice of partners $X$ and $X'$, hence in general, none of the $\Sigma_i$ are isomorphic to any $\Sigma_j'$, using the Clebsch-Salmon invariants of cubic surfaces as described in \cite{ElsenhansJahnel} to distinguish non-isomorphic cubic surfaces. Specifically, the code checks \cite[Sections5-8.m2]{MacaulayFiles} that for the cubics $X$ and $X'$ of the family described in Example \ref{example:A4} with parameters $\alpha=\gamma=\delta =\lambda =1$, $\beta =2$, the (absolute) invariant  $I_8^2/I_{16}$ from \cite[Prop. 6.2]{ElsenhansJahnel} is different for the $\Sigma_i$ from the corresponding invariant for any of the $\Sigma_j'$. Since these are polynomial invariants depending continuously on the initially given cubics $X, X'$, we obtain that a general pair $X, X'$ is not isomorphic. 
\end{proof}

\section{Non-syzygetic $G$-cubic fourfolds}\label{sGActions}
Our main motivation for the linear algebraic construction of non-syzygetic cubics is to study $G$-cubic fourfolds. Indeed, it is easy to endow both $X$ and the Gale dual cubic $X'$ with a group action using the $\rho$-Lagrangian construction.

\begin{definition}\label{dGLagrangianData}
Let $G$ be a finite group and let $E$ and $F$ be faithful three-dimensional $G$-representations. Suppose $A \subset \Lambda^3 (E\oplus F)$ is a $G$-subrepresentation and at the same time a $\rho-$Lagrangian. We then call $(G, A, E, F)$ a $(G, \rho)-$Lagrangian data set and $A$ a $(G, \rho)-$Lagrangian.
\end{definition}

\begin{proposition}\label{pGCubics}
Let $(G, A, E, F)$ be a $(G, \rho)-$Lagrangian data set. Then $G$ acts naturally on the cubic hypersurfaces $\widetilde{X}_E,\widetilde{X}_F \subset \P(\Lambda^3 V_6)$ from Definition \ref{dBigCubics} given by
\[
    2\det M_E - \sigma L_E = 0 
    \quadand
    2\det M_F  + \sigma  L_F = 0,
\]
and also on their projections \[
        X_E \subset \P(A_E\oplus \C \oplus \C) 
        \quadand
    X_F \subset \P(A_F\oplus \C \oplus \C) .
    \]
    \end{proposition}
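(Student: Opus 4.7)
The plan is to exhibit each defining polynomial as a $G$-semi-invariant --- that is, a scalar multiple of itself under every $g \in G$ --- so that its vanishing locus is $G$-invariant in $\mathbb{P}(\Lambda^3 V_6)$; the $G$-invariance of the subspaces $A, A_E, A_F$ inside $\Lambda^3 V_6$ then yields the $G$-actions on the projected cubics.

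First I would note that, since $E$ and $F$ are $G$-representations, so is $V_6 = E \oplus F$, and the decomposition $\Lambda^3 V_6 = U_E \oplus U_F$ from Section \ref{sLagrangian} is $G$-equivariant. Write $\chi_E$ and $\chi_F$ for the characters by which $G$ acts on the one-dimensional representations $\Lambda^3 E^*$ and $\Lambda^3 F^*$, respectively. I would verify the following transformation rules: $L_E$ is acted on by $\chi_E$ and $L_F$ by $\chi_F$ directly from their definitions; the polynomial $\det M_E$ lies in the Schur component $\Lambda^3(\Lambda^2 E^*) \otimes \Lambda^3 F^* \subset S^3(\Lambda^2 E^* \otimes F^*)$ of the Cauchy decomposition, and since $\Lambda^3(\Lambda^2 E^*) \cong (\Lambda^3 E^*)^{\otimes 2}$ when $\dim E = 3$, it transforms by $\chi_E^2 \chi_F$, and symmetrically $\det M_F$ by $\chi_E \chi_F^2$; finally the quadric $\sigma$ is defined, via Proposition \ref{pDualBases}, through the wedge pairing $\Lambda^3 V_6 \otimes \Lambda^3 V_6 \to \Lambda^6 V_6 = \Lambda^3 E \otimes \Lambda^3 F$ normalised by the trivialisation $L_E \wedge L_F$, hence it transforms by $\chi_E \chi_F$.

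Combining these, the two summands of $2 \det M_E - \sigma L_E$ both transform by $\chi_E^2 \chi_F$, while the two summands of $2 \det M_F + \sigma L_F$ both transform by $\chi_E \chi_F^2$. Each defining polynomial therefore spans a one-dimensional $G$-subrepresentation of $\mathbb{C}[\Lambda^3 V_6]_3$, and the hypersurfaces $\widetilde{X}_E, \widetilde{X}_F \subset \mathbb{P}(\Lambda^3 V_6)$ are $G$-invariant.

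For the projected cubics, $A \subset \Lambda^3 V_6$ is a $G$-subrepresentation by hypothesis, so $A_E = A \cap U_E$ and $A_F = A \cap U_F$ are $G$-subrepresentations as well. Thus $\mathbb{P}(A)$ contains the $G$-invariant centres $\mathbb{P}(A_E)$ and $\mathbb{P}(A_F)$; the quotients $A/A_F$ and $A/A_E$ carry induced $G$-representations, and the two-dimensional $G$-subquotient $A/(A_E \oplus A_F)$ supplies the $\mathbb{C}^2$ factor in the targets $\mathbb{P}(A_E \oplus \mathbb{C}^2)$ and $\mathbb{P}(A_F \oplus \mathbb{C}^2)$ of the projections. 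The projections of the $G$-invariant intersections $\widetilde{X}_E \cap \mathbb{P}(A)$ and $\widetilde{X}_F \cap \mathbb{P}(A)$ are then $G$-equivariant by construction, inducing $G$-actions on $X_E$ and $X_F$. The only mildly delicate step is the character bookkeeping for $\det M_E, \det M_F, \sigma$, but this is a routine Schur-functor calculation that can be cross-checked on a small test case if needed.
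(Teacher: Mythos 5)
Your proposal is correct and follows essentially the same route as the paper: the paper tracks the ``bidegree'' of each term (noting every summand of $2\det M_E - \sigma L_E$ has bidegree $(6,3)$, hence is scaled by $\det(g)^{-2}\det(h)^{-1}$ under $(g,h)\in \mathrm{GL}(E)\times\mathrm{GL}(F)$), which is exactly your character computation $\chi_E^2\chi_F$ phrased via Schur functors. The treatment of the projections --- $G$-invariance of $A$, $A_E$, $A_F$ and hence equivariance of the projection from the vertex --- also matches the paper's argument.
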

\begin{proof}
This is obvious from the construction in Section \ref{sRepTheory} if $G$ is contained in $G_{3,3}=\mathrm{SL}(E) \times \mathrm{SL}(F)$. If more generally, $G\subset \mathrm{GL}(E) \times \mathrm{GL}(F)$, note that the quantities $L_E$, $M_E$ and $\sigma$ defined in Section \ref{sRepTheory} have natural bidegrees given by the degree of the wedge monomials in the $e_i$ and $f_j$ occurring in them. Then $\det M_E$ has bidegree $(6,3)$, $L_E$ bidegree $(3,0)$, $\sigma$ bidegree $(3,3)$. So every term in the equation for $\widetilde{X}_E$ in Definition \ref{dBigCubics} has bidegree $(6,3)$, so the whole equation gets multiplied by $\det(g)^{-2} \det(h)^{-1}$ when we apply an element $(g, h)\in \mathrm{GL}(E) \times \mathrm{GL}(F)$. So $\widetilde{X}_E$ and similarly $\widetilde{X}_F$ are invariant under $G$ and so are their projections $X_E$ and $X_F$ since $A\cap F=A_F$ and $A\cap E =A_E$ are $G$-invariant as well. 
\end{proof}

We will usually restrict attention to the case when the $G-$action on the $G-$cubics $X_E, X_F$ constructed from a $(G, \rho)-$Lagrangian data set is faithful. 

Note that this construction is interesting for testing conjectures (I), (II), (III) from the Introduction in an equivariant set-up: indeed, the proof of the birationality of $X_E$ and $X_F$ in Theorem \ref{tConnectionToGMs} does not necessarily hold equivariantly.  For example, $E$ or $F$ could be irreducible $G$-representations - in this case, the birational map exhibited between $X_E, X_F$ is no longer a $G$-birational map. 

We explain how one can adapt the proof of Theorem \ref{tConnectionToGMs} in order to extend to the $G$-equivariant setting.
The first step is ensuring that the Gushel-Mukai fourfolds $Z$ and $Z'$ constructed from the $G$-Lagrangian data set are $G$-Gushel Mukai fourfolds. 
This can be guaranteed by selecting the corresponding $V_5$ (and $V_5')$ to be a $G$-representation containing $E$ (respectively $F$).
Next, one needs to construct a $G$-birational map between $Z$ and $Z'$. This can be achieved by adjusting the birationality construction of \cite[Section 4.5-4.6]{DK1}, which we now review and modify to the equivariant context.

In \cite{DK1}, the authors prove that two Gushel Mukai $Z$ and $Z'$ corresponding to the same Lagrangian $A$ are both birational to another smooth Gushel Mukai $\widetilde{Z}$ that is dual to both $Z$ and $Z'$ simultaneously (where by dual we mean as in \cite[Definition 3.26]{DK1}). The authors construct a birational map between $Z$ and the dual GM fourfold $\widetilde{Z}$ (the same construction holds for $Z')$. This mutual birationality construction depends on a choice of:
$$V_1\subset V_5\cap V_5'$$ 
such that $[V_1]$ is chosen inside a particular locally closed algebraic subset $Y$ of $\bP(V_6)$ whose closure is either  a sextic threefold if $\dim (Y_5\cap Y_5')=5$ or a sextic surface if $\dim (Y_5\cap Y_5')=4$. By the proof of  \cite[Theorem 4.15]{DK1}, this locally closed algebraic subset is non-empty.

In order to make the construction $G$-equivariant, it suffices to pick a $G$-subrepresentation $V_1\subset V_5\cap V_5'$. Thus it follows (without loss of generality) that we must have reducible $G$-representations $E$ and $F$ such that:
\begin{equation}\label{G-reps}
    E=V_1^E \oplus V_2^E, \quad F=V_1^F\oplus V_2^F,
\end{equation} where $V_2^E = V_5'\cap E$ and $V_2^F= F \cap V_5$ and $V_1^E$ and $V_1^F$ are complementary subrepresentations, and $V_1 \subset V_2^E \oplus V_2^F$. In particular, $V_2^E$ or $V_2^F$ or both can be decomposed into a one-dimensional subrepresentation isomorphic to $V_1$ and a one-dimensional complement. 

Thus we have:

\begin{proposition}\label{pBirationalityConstruction}
    Let $X_E, X_F$ be constructed from a $(G,\rho)$-Lagrangian data set. Assume $E, F$ are as in \ref{G-reps} and that $A$ contains no decomposable vectors. Then $X_E$ and $X_F$ are $G$-birational provided that $V_1$ is in $Y$.
\end{proposition}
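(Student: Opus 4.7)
The plan is to run the construction of Theorem \ref{tConnectionToGMs}(b),(c) in a $G$-equivariant manner, using the extra hypotheses on the decomposition of $E$ and $F$ and on $V_1$. Concretely, I will exhibit a chain of $G$-birational maps
\[
X_E \dashleftarrow Z \dashrightarrow \widetilde{Z} \dashleftarrow Z' \dashrightarrow X_F,
\]
where $Z$, $Z'$, $\widetilde{Z}$ are Gushel-Mukai fourfolds endowed with compatible $G$-actions.

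First, I would set up the Lagrangian construction data equivariantly. Since $E=V_1^E\oplus V_2^E$ and $F=V_1^F\oplus V_2^F$ as $G$-representations, the hyperplanes
\[
V_5 := V_2^E \oplus F \quad \text{and}\quad V_5' := E\oplus V_2^F
\]
are $G$-invariant, and $F\subset V_5$, $E\subset V_5'$. By the dimension conditions of Definition \ref{dRhoLagrangian} we have $\dim (A\cap \Lambda^3 V_5)=\dim (A\cap \Lambda^3 V_5')=1$, so the triples $(V_6,V_5,A)$ and $(V_6,V_5',A)$ are Lagrangian data sets, in fact $G$-Lagrangian data sets since $A$ is a $G$-subrepresentation of $\Lambda^3(E\oplus F)$ by hypothesis. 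By the functoriality of Debarre-Kuznetsov's construction (\cite[Thm.\ 3.10]{DK1}), the associated Gushel-Mukai fourfolds $Z$ and $Z'$ then inherit a faithful $G$-action, i.e., they are $G$-Gushel-Mukai fourfolds. Moreover, since $F\subset V_5$ and $E\subset V_5'$ are $G$-invariant, so are the $\rho$-planes $\mathrm{Gr}(2,F)\subset Z$ and $\mathrm{Gr}(2,E)\subset Z'$. Hence, by Theorem \ref{tConnectionToGMs}(b), the projections from these $\rho$-planes realise $X_E$ and $X_F$ as the images, and these projections are $G$-equivariant birational maps.

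Second, I would construct the $G$-birational map $Z\dashrightarrow Z'$. For this one follows the scheme of \cite[\S4.5--4.6]{DK1}: choose a point $[V_1]\in Y\subset \bP(V_5\cap V_5')$ in the distinguished locus described there, and then $Z$ and $Z'$ are both birational to the dual Gushel-Mukai fourfold $\widetilde{Z}$ associated to $(V_6,V_1,A)$, via the explicit quadratic bundle constructions of loc.\ cit.\ together with the Debarre-Iliev-Manivel framework. The only place where a non-canonical choice enters is the selection of $V_1\subset V_5\cap V_5'=V_2^E\oplus V_2^F$; by hypothesis $V_1$ is a $G$-subrepresentation lying in $Y$, so the resulting intermediate fourfold $\widetilde{Z}$ and the two birational maps to it are $G$-equivariant. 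Composing, we obtain a $G$-birational map $Z\dashrightarrow Z'$, and then
\[
X_E \dashleftarrow Z \dashrightarrow Z' \dashrightarrow X_F
\]
is the desired $G$-birational map between $X_E$ and $X_F$.

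The main obstacle is Step two: one has to verify that the birationality construction of \cite[\S4.5--4.6]{DK1} is genuinely functorial with respect to automorphisms of the Lagrangian data set, i.e.\ that the dual GM fourfold $\widetilde{Z}$ and the birational maps $Z\dashrightarrow \widetilde{Z}$ and $Z'\dashrightarrow \widetilde{Z}$ depend only on the $G$-equivariant data $(V_6, V_5, V_5', V_1, A)$ and therefore carry natural $G$-actions making them $G$-equivariant. This is where the reducibility assumption \eqref{G-reps} on $E$ and $F$ is indispensable: without a $G$-invariant $V_1\subset V_5\cap V_5'$, the construction inevitably breaks the $G$-symmetry, and no $G$-birational map can be produced by this route. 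Everything else in the argument is formal transfer of the non-equivariant construction once the $G$-invariant choices have been made.
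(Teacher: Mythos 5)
Your proposal takes essentially the same route as the paper, which proves this proposition precisely via the discussion preceding its statement: choose $G$-invariant hyperplanes $V_5$ and $V_5'$ containing $E$ and $F$ so that $Z$ and $Z'$ become $G$-Gushel--Mukai fourfolds, then make the Debarre--Kuznetsov mutual-duality construction of \cite[\S 4.5--4.6]{DK1} equivariant by taking $V_1\subset V_5\cap V_5'$ to be a $G$-subrepresentation lying in the locus $Y$, which is exactly what hypothesis \eqref{G-reps} supplies. Your explicit flagging of the functoriality of the duality construction as the step needing verification is the same point the paper leaves implicit, so the argument is correct and aligned with the intended proof.
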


    In particular, if $G$ is abelian, its irreducible representations are one-dimensional character representations, and $G$-splittings as in equation \ref{G-reps} always exist. In particular, if $V_2^E \oplus V_2^F = V_1^{\oplus 4}$ for some one-dimensional representation $V_1$, an admissible choice of $V_1$ can always be made because $Y$ is non-empty. 

We also have the following result for general $G$. 

\begin{proposition}\label{prop:FanoLinesBirational}
Let $G$ be a finite group and let $X_E, X_F$ be cubic fourfolds constructed from a $(G, \rho)-$Lagrangian data set. Assume that there exists a point  $(e,f)$ on the EPW-sextic $Y_{A_i^\perp}$ such that $\Pi_i(e,f)$ is a plane and    $\Gamma_i(f)$ is a line. Then the Fano varieties of lines of the two cubics are $G$-birational. 
\end{proposition}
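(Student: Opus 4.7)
The plan is to factor the desired $G$-birational map $F(X_E) \dashrightarrow F(X_F)$ through the common O'Grady double cover $\widetilde{Y_{A^\perp}}$ of the EPW sextic $Y_{A^\perp}$. Since $A$ is a $G$-subrepresentation of $\Lambda^3 V_6$, the EPW sextic $Y_{A^\perp} \subset \mathbb{P}(V_6^*)$ is $G$-invariant, and its O'Grady double cover $\widetilde{Y_{A^\perp}}$, being canonically determined by the Lagrangian $A$ and its associated EPW stratification (branched over the $G$-invariant singular locus), inherits a natural $G$-action with $G$-equivariant covering involution.

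I would then upgrade the birational map of Corollary \ref{cBirFanos} to a $G$-equivariant map $\widetilde{Y_{A^\perp}} \dashrightarrow F(X_E)$. The key is to pick the distinguished linear form $L_i$ carefully: among the three linear factors appearing in the equation $\pi(\det M_E) + \pi(L_E) \cdot a_8 a_9 = 0$ for $X_E$ from Proposition \ref{pProjectionLagrangian}, the form $\pi(L_E)$ is $G$-semi-invariant, since $L_E \in \Lambda^3 E^*$ transforms by the determinantal character of $E^*$ and hence defines a $G$-invariant hyperplane. Using this choice, the auxiliary varieties $\widetilde{\Pi}(e,f)$, $\widetilde{\Gamma}(f)$, $\widetilde{C}(e,f)$ in $\mathbb{P}(\Lambda^3 V_6)$ from the proof of Proposition \ref{pSingular} are cut out by expressions involving only the $G$-equivariant data $M_E$ and $L_E$; intersecting with the $G$-invariant subspace $\mathbb{P}(A)$ and projecting from the $G$-invariant vertex $\mathbb{P}(A_F) = \mathbb{P}(A\cap U_F)$ produces a $G$-equivariant assignment $(e,f) \mapsto C(e,f)$. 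The two sheets of $\widetilde{Y_{A^\perp}}$ over a general point of the EPW sextic correspond exactly to the two components of the singular conic $C(e,f)$, so this construction promotes to a $G$-equivariant rational map $\widetilde{Y_{A^\perp}} \dashrightarrow F(X_E)$, birational by the argument of Corollary \ref{cBirFanos}. The same reasoning, with $E$ and $F$ interchanged and $L_F$ used as the distinguished form, yields a $G$-equivariant birational map $\widetilde{Y_{A^\perp}} \dashrightarrow F(X_F)$; composing one with the inverse of the other produces the desired $G$-birational map $F(X_E) \dashrightarrow F(X_F)$. The genericity hypothesis on $(e,f)$ ensures the maps are defined and birational, and the resulting domain of definition can be saturated under the $G$-action to a $G$-invariant open subset without loss.

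The main obstacle I anticipate is verifying the $G$-equivariance of the O'Grady double cover $\widetilde{Y_{A^\perp}}$ and its covering involution in sufficient detail. This should follow from the intrinsic description of the double cover via O'Grady's line bundle $\zeta_A$ on the smooth locus of $Y_{A^\perp}$, whose construction is manifestly natural in $A$, but some care is required to track the characters by which $G$ may act on $L_E$, $L_F$, and the two-dimensional quotient $W = A/(A_E \oplus A_F)$: these scalar twists modify the defining equations of $X_E$ and $X_F$ without altering the projective varieties themselves, and must be absorbed into the identifications at each stage of the conic construction to certify $G$-equivariance throughout.
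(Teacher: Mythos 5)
Your proposal is correct and follows essentially the same route as the paper: the paper's proof simply observes that the construction of Section \ref{sFanoLines}, in particular the chain $\widetilde{Y_{A_i^\perp}} \dashrightarrow F(X_E)$ and $\widetilde{Y_{A_i^\perp}} \dashrightarrow F(X_F)$ from Corollary \ref{cBirFanos}, goes through equivariantly since all the input data are $G$-equivariant. Your additional care about the naturality of the O'Grady double cover and the semi-invariance of $L_E$, $L_F$ is a more detailed justification of the same argument rather than a different one.
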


\begin{proof}
The proofs in Section \ref{sFanoLines}, in particular the proof of Corollary \ref{cBirFanos} go through equivariantly, together with the $G$-action. 
\end{proof}

The following is the simplest example when smooth cubics $X_E$ and $X_F$ exist for a nonabelian $G$ where it is not clear to us if $X_E$ and $X_F$ are $G$-birational. Indeed, in this case the birational map of Theorem \ref{tConnectionToGMs} is not $G$-birational.

\begin{example}\label{example:A4}
Let $G=A_4$. Notice that $A_4$ is a semi-direct product $K_4 \rtimes \mathbb{Z}/3$ where $K_4 \simeq \mathbb{Z}/2 \times \mathbb{Z}/2$ is the Klein four subgroup. 
Let $V$ be the three-dimensional irreducible representation of $A_4$ generated by the elements
\[
	\begin{pmatrix}
	1 & 0 & 0 \\
	0 & -1 & 0 \\
	0 & 0  & -1
	\end{pmatrix},
	\begin{pmatrix}
	-1 & 0 & 0 \\
	0 & 1 & 0 \\
	0 & 0  & -1 \\
	\end{pmatrix},
	\begin{pmatrix}
	0 & 1 & 0 \\
	0 & 0 & 1 \\
	1 & 0  & 0
	\end{pmatrix}.
\]	
The first two matrices generate the Klein $4$-group $K_4=K \subset A_4$ whereas the last generates a copy of $C_3=\mathbb{Z}/3$. Let $\xi$ be a primitive cube root of unity and write $\chi_0,\chi_1,\chi_2$ for the three one-dimensional character representations of $A_4$ where the third matrix above acts via $\xi^i$ in $\chi_i$.

\medskip

Choosing $E=F=V$, we get 
\begin{align*}
\Lambda^3 V_6 &= \Lambda^3 E 
\oplus \bigl(\Lambda^2 E \otimes F\bigr) 
\oplus \bigl( E \otimes \Lambda^2 F \bigr) \oplus \Lambda^3 F \\
 &= \chi_0 
 \oplus \bigl( V\oplus V 
 \oplus \chi_0 \oplus \chi_1 \oplus \chi_2 \bigr) 
 \oplus \bigl( V\oplus V 
 \oplus \chi_0 \oplus \chi_1 \oplus \chi_2 \bigr) 
 \oplus \chi_0 .
\end{align*}
Now we pick a four-dimensional subrepresentation  \[ A_E \simeq \chi_0 \oplus V \subset \Lambda^3 E \oplus \bigl(\Lambda^2 E \otimes F\bigr). \]
Notice that we have a 
by choosing $\chi_0$ and a copy of $V$ in the pencil of such $A_4$-subrepresentations that $\Lambda^3 E \oplus \bigl(\Lambda^2 E \otimes F\bigr)$ contains. Then choose 
\[ A_F\simeq \chi_0 \oplus V \subset  \bigl( E \otimes \Lambda^2 F \bigr) \oplus \Lambda^3 F\] such that $A_E \wedge A_F =0.$ Finally, pick $\chi_1$ in the pencil of such representations in $\Lambda^3 V_6$ and $\chi_2$ in $\Lambda^3 V_6$ with bases such that $\sigma$ has the form in Proposition \ref{pRhoLagrangian}. With these choices we obtain the following equations defining  cubics $X_E$ and $X_F$ \cite[Sections5-8.m2]{MacaulayFiles}:
\[
\left(\!\begin{array}{ccc}
       \delta\,X_{7}+\lambda\,(X_{8}+X_{9})
       &\beta\,X_{4}
       &\alpha\,X_{6}
       \\
       \alpha\,X_{4}
       &\delta\,X_{7}+\lambda\,(\xi\, X_{8}+\xi^2\,X_{9})
       &\beta\,X_{5}
       \\
       \beta\,X_{6}
       &\alpha\,X_{5}
       &\delta\,X_{7}+\lambda\,(\xi^2\,X_{8}+\xi\,X_{9})
       \end{array}
\!\right)+\gamma\,X_{7}X_{8}X_{9} =0 
\]
and 
\[
       \left(\!\begin{array}{ccc}
       -\gamma\,X_{3}-\frac{1}{3\,\lambda}(X_{8}+X_{9})
       &-\alpha\,X_{0}
       &\beta\,X_{2}
       \\
       \beta\,X_{0}
       &-\gamma\,X_{3}-\frac{1}{3\,\lambda}(\xi\,X_{8}+\xi^2\,X_{9})
       &-\alpha\,X_{1}
       \\
       -\alpha\,X_{2}
       &\beta\,X_{1}
       &-\gamma\,X_{3}-\frac{1}{3\,\lambda}(\xi^2\,X_{8}+\xi\,X_{9})
       \end{array}\!\right)
       -3\,\delta\,X_{3}X_{8}X_{9} =0 . 
\]
where $(\alpha : \beta)$ represents the choice of $V$ in $A_E$, and $(\gamma : \delta )$ the choice of $\chi_0$ in $A_E$, and $\lambda$ encodes the freedom of choice in $\chi_1$. 

$X_E$ and $X_F$ are invariant under the action of $A_4$ given as follows: the generators above act on $\C[X_0,\dots,X_9]$ with the following matrices:
\begin{align*}
	&\diag\{-1,1,-1,1,-1,1,-1,1,1,1\} \\
	&\diag\{-1,-1,1,1,-1,-1,1,1,1,1\} \\
       	&\begin{pmatrix}
	   0&1&0&\\
       0&0&1\\
       1&0&0\\
       &&&1\\
       &&&&0&1&0\\
       &&&&0&0&1\\
       &&&&1&0&0\\
       &&&&&&&1&&\\
       &&&&&&&&\xi^2&\\
       &&&&&&&&&\xi
       \end{pmatrix}.
\end{align*}

 One can check that $X_E$ and $X_F$ are not isomorphic for specific choices of the parameters over $\mathbb{F}_{97}$ \cite[Example8.5.m2]{MacaulayFiles}.
\end{example}

\begin{question}
Are $X$ and $X'$ always $A_4$-birational? If not, they would provide a counterexample to the equivariant version of Conjecture (III) of the Introduction. Are $X$ and $X'$ $A_4$-Fourier-Mukai partners? In other words, are their Kuznetsov components $A_4$-equivalent? If yes and if $X$ and $X'$ are not $A_4$-birational, then they would provide a counterexample to the equivariant versions of Conjectures (I) and (II) of the Introduction. 
\end{question}

\bibliographystyle{alpha}
\bibliography{bibliography}

\begin{thebibliography}{BGvBM25}

\bibitem[AT14]{AT14}
Nicolas Addington and Richard Thomas.
\newblock Hodge theory and derived categories of cubic fourfolds.
\newblock {\em Duke Math. J.}, 163(10):1885--1927, 2014.

\bibitem[BFM24]{Brooke:2024aa}
Corey Brooke, Sarah Frei, and Lisa Marquand.
\newblock Cubic fourfolds with birational fano varieties of lines.
\newblock {\em arXiv preprint, https://arxiv.org/pdf/2410.22259.pdf}, 2024.

\bibitem[BFMQ24]{BFMQ24}
Corey Brooke, Sarah Frei, Lisa Marquand, and Xuqiang Qin.
\newblock Birational geometry of fano varieties of lines on cubic fourfolds
  containing pairs of cubic scrolls, 2024.

\bibitem[BFvGS24]{BFvGS}
Eva Bayer-Fluckiger, Bert van Geemen, and Matthias Schütt.
\newblock K3 surfaces with real or complex multiplication, 2024.

\bibitem[BGM25]{BGM25}
Simone Billi, Annalisa Grossi, and Lisa Marquand.
\newblock Cubic fourfolds with a symplectic automorphism of prime order, 2025.

\bibitem[BGvBM25]{MacaulayFiles}
Christian B\"ohning, Hans-Christian Graf~v. Bothmer, and Lisa Marquand.
\newblock Macaulay2 files for the paper ``{F}ourier-{M}ukai partners of
  non-syzygetic cubic fourfolds and {G}ale duality".
\newblock Available at \url{https://doi.org/10.5281/zenodo.15396020}, 2025.

\bibitem[BO23]{Beck}
Thorsten Beckmann and Georg Oberdieck.
\newblock On equivariant derived categories.
\newblock {\em Eur. J. Math.}, 9(2):Paper No. 36, 39, 2023.

\bibitem[DIM15]{DIM15}
O.~Debarre, A.~Iliev, and L.~Manivel.
\newblock Special prime {F}ano fourfolds of degree 10 and index 2.
\newblock In {\em Recent advances in algebraic geometry}, volume 417 of {\em
  London Math. Soc. Lecture Note Ser.}, pages 123--155. Cambridge Univ. Press,
  Cambridge, 2015.

\bibitem[DK18]{DK1}
Olivier Debarre and Alexander Kuznetsov.
\newblock Gushel-{M}ukai varieties: classification and birationalities.
\newblock {\em Algebr. Geom.}, 5(1):15--76, 2018.

\bibitem[DK20]{DK3}
Olivier Debarre and Alexander Kuznetsov.
\newblock Gushel-{M}ukai varieties: moduli.
\newblock {\em Internat. J. Math.}, 31(2):2050013, 59, 2020.

\bibitem[EJ20]{ElsenhansJahnel}
Andreas-Stephan Elsenhans and J\"org Jahnel.
\newblock Computing invariants of cubic surfaces.
\newblock {\em Matematiche (Catania)}, 75(2):457--470, 2020.

\bibitem[EP00]{EP00}
David Eisenbud and Sorin Popescu.
\newblock The projective geometry of the {G}ale transform.
\newblock {\em J. Algebra}, 230(1):127--173, 2000.

\bibitem[FL23]{FanLaiCremona}
Yu-Wei Fan and Kuan-Wen Lai.
\newblock New rational cubic fourfolds arising from {C}remona transformations.
\newblock {\em Algebr. Geom.}, 10(4):432--460, 2023.

\bibitem[Huy17]{HuybrechtsK3}
Daniel Huybrechts.
\newblock The {K}3 category of a cubic fourfold.
\newblock {\em Compos. Math.}, 153(3):586--620, 2017.

\bibitem[Huy23]{Huybrechtscubicsbook}
Daniel Huybrechts.
\newblock {\em The geometry of cubic hypersurfaces}, volume 206 of {\em
  Cambridge Studies in Advanced Mathematics}.
\newblock Cambridge University Press, Cambridge, 2023.

\bibitem[Huy25]{Huybrechts:2025aa}
Daniel Huybrechts.
\newblock The k3 category of a cubic fourfold: an update.
\newblock {\em Beitr{\"a}ge zur Algebra und Geometrie / Contributions to
  Algebra and Geometry}, 2025.

\bibitem[KP18]{KuzPerry}
Alexander Kuznetsov and Alexander Perry.
\newblock Derived categories of {G}ushel-{M}ukai varieties.
\newblock {\em Compos. Math.}, 154(7):1362--1406, 2018.

\bibitem[KP23]{KuzPerryCatCones}
Alexander Kuznetsov and Alexander Perry.
\newblock Categorical cones and quadratic homological projective duality.
\newblock {\em Ann. Sci. \'Ec. Norm. Sup\'er. (4)}, 56(1):1--57, 2023.

\bibitem[Laz09]{laz09}
Radu Laza.
\newblock The moduli space of cubic fourfolds.
\newblock {\em J. Algebraic Geom.}, 18(3):511--545, 2009.

\bibitem[Mar23]{Mar23}
Lisa Marquand.
\newblock Cubic fourfolds with an involution.
\newblock {\em Trans. Amer. Math. Soc.}, 376(2):1373--1406, 2023.

\bibitem[Nik79]{nikulin}
V.~V. Nikulin.
\newblock Integer symmetric bilinear forms and some of their geometric
  applications.
\newblock {\em Izv. Akad. Nauk SSSR Ser. Mat.}, 43(1):111--177, 238, 1979.

\bibitem[O'G16]{OG16}
Kieran~G. O'Grady.
\newblock Moduli of double {EPW}-sextics.
\newblock {\em Mem. Amer. Math. Soc.}, 240(1136):ix+172, 2016.

\end{thebibliography}

\end{document}